\theoremstyle{plain}
\newtheorem{theorem}{Theorem}[section]
\newtheorem{lemma}[theorem]{Lemma}
\newtheorem{proposition}[theorem]{Proposition}
\newtheorem{corollary}[theorem]{Corollary}
\theoremstyle{definition}
\newtheorem{definition}[theorem]{Definition}
\newtheorem{example}[theorem]{Example}
\theoremstyle{remark}
\newcommand{\med}{\mathrm{med}}
\newcommand{\bfa}{\mathbf{a}}
\newcommand{\bfb}{\mathbf{b}}
\newcommand{\bfc}{\mathbf{c}}
\newcommand{\bfx}{\mathbf{x}}
\newcommand{\Cl}{\mathcal{O}}
\newcommand{\Clf}[2]{\smash{\mathcal{O}^{(#1)}_{#2}}}
\newcommand{\struc}[1]{\langle #1 \rangle}
\date{\today,\currenttime}
\title{Pivotal decomposition schemes inducing clones of operations}
\author{Miguel Couceiro}
\address{LORIA, (CNRS - Inria Nancy Grand Est - Universit\'e de Lorraine),
BP239~-~54506 Vandoeuvre les Nancy, France}
\email{miguel.couceiro@\{loria,inria\}.fr}
\author{Bruno Teheux}
\address{Mathematics Research Unit, FSTC,  University of Luxembourg, 6, rue Coudenhove-Kalergi, L-1359 Luxembourg, Luxembourg}
\email{bruno.teheux@uni.lu}
\begin{document}

\begin{abstract}
We study pivotal decomposition schemes and investigate classes of pivotally decomposable operations. 
We  provide sufficient conditions on pivotal operations that 
guarantee that the corresponding classes of pivotally decomposable operations are clones, and show that under certain assumptions
these conditions are also necessary. In the latter case, the pivotal operation together with the constant operations  generate the corresponding clone. 
\end{abstract}

\maketitle

\section{Introduction and Motivation}

Several classes of operations have the remarkable feature that each member $f\colon A^n\to A$ is decomposable into simpler operations that are then combined by a single operation,
in order to retrieve the values of the original operation $f$. A noteworthy example is the class of Boolean functions $f\colon \{0,1\}^n\to \{0,1\}$   that can be decomposed into 
expressions of the form
\begin{equation}
 f(\mathbf{x})=x_kf(\mathbf{x}^1_k)+(1-x_k)f(\mathbf{x}^0_k),
 \label{eq:Shancomp}
\end{equation}
for $\mathbf{x}=(x_1,\dots,x_n)\in \{0,1\}^n$ and $k\in[n]$, and  
where $\mathbf{x}^c_k$ denotes the $n$-tuple obtained from $\mathbf{x}$ by substituting its $k$-th component by $c\in \{0,1\}$. 
Such decomposition scheme is referred to as \emph{Shannon decomposition} (or \emph{Shannon
expansion}) \cite{Shannon}, or \emph{pivotal decomposition} \cite{Barlow}.
Boolean functions are similarly  decomposable into expressions in the language of Boolean lattices
\begin{equation}
 f(\mathbf{x})=(x_k\wedge f(\mathbf{x}^1_k))\vee (\overline{x}_k\wedge f(\mathbf{x}^0_k))
 \label{eq:latcomp}
\end{equation}
where $\overline{x}_k=1-{x}_k$.  

More recent examples include the class of polynomial operations over a distributive lattice (essentially, combinations of variables and constants using the lattice 
operations $\wedge$ and $\vee$) that were shown in~\cite{Marichal} to be decomposable into expressions of the form
\begin{equation}
f(\mathbf{x}) = 
\med(f(\mathbf{x}^0_k), x_k,
     f(\mathbf{x}^1_k)),
\label{eq:medcomp}
\end{equation}
where $\med$ is the ternary lattice polynomial given by
\begin{align*}
\med(x_1, x_2, x_3)
&= (x_1 \wedge x_2) \vee (x_1 \wedge x_3) \vee (x_2 \wedge x_3) \\
&= (x_1 \vee x_2) \wedge (x_1 \vee x_3) \wedge (x_2 \vee x_3). 
\end{align*}
The latter decomposition scheme is referred to as \emph{median decomposition} in \cite{CM2009} and \cite{Marichal}. We refer the reader to \cite{Miyata,Tohma,CLMW} for applications of
the median decomposition formula to obtain median representations of Boolean functions.

Note that  decomposition schemes (\ref{eq:Shancomp}),  (\ref{eq:latcomp}) and (\ref{eq:medcomp}) share the same general form, namely,
$$f(\mathbf{x}) = \Pi (x_k, f(\mathbf{x}^1_k), f(\mathbf{x}^0_k)).$$
Indeed,
\begin{itemize}
 \item  in (\ref{eq:Shancomp}) we have $ \Pi (x, y, z)= x y + (1 - x) z$,
 \item in (\ref{eq:latcomp})  we have $ \Pi (x, y, z)= (x\wedge y) \vee (\overline{x}\wedge z)$, and
\item  in  (\ref{eq:medcomp}) we have $ \Pi (x, y, z)=\med(x,y,z)$.\\
 \end{itemize}

These facts were observed in \cite{Marichal2014} where such pivotal decomposition schemes were investigated.
These preliminary efforts were then further pursued under the observation  that certain classes of pivotal operations
fulfill certain closure requirements, notably, closure under functional composition. This led to the study \cite{CT2015} of 
those classes of pivotally decomposable operations that constitute clones. In particular, we presented conditions on pivotal operations
to ensure that the corresponding classes of pivotally decomposable operations constitute clones. However, several questions were stated without
being answered. In this paper we settle many of these questions and provide new insights in this line of research.  
%
%

The paper is organised as follows. In Section \ref{sec:Basic} we recall basic notions and terminology 
that will be used throughout the paper (Subsection \ref{sec:preliminaries}). 
We also introduce the concepts of pivotal operation and that of pivotally decomposable class (Subsection \ref{sec:Pivotal1}) and discuss normal form representations that arise from 
such pivotal decompositions (Subsection \ref{sec:Pivotal2}). Moreover, we investigate certain symmetry properties that are common to pivotal operations (Subsection \ref{sec:Pivotal3}).
In Section \ref{sec:Clones} we consider the problem of describing classes of pivotally decomposable operations that are clones.
A general solution to this problem still eludes us, but we provide several sufficient conditions on pivotal operations that ensure the latter 
(Subsection \ref{sub:suff}).
In fact,  we show that under certain assumptions many of these conditions are also necessary (Subsection \ref{sub:nec}). 
The question of determining sets of generators for 
clones of pivotally decomposable operations is also addressed and partially answered. 
Taking this framework further into the realm of clone theory, many natural questions emerge.
For instance, we construct an example of a pivotal operation $\Pi$ for which the class of $\Pi$-decomposable operations is a clone that does not contain $\Pi$
(Subsection \ref{sub:nec2});
such an example is shown not to exist in the case of Boolean functions. 
Further questions that remain open are then discussed in Section \ref{sec:final}.

\section{Basic notions and notation}
\label{sec:Basic}

In this section we recall basic terminology used throughout the paper. In particular, we introduce the concepts 
of pivotal operation and  of pivotally decomposable class, and we observe that, under certain conditions, pivotal decompositions
lead to normal form representations that use a unique non trivial  connective, namely, the pivotal operation.
In the last subsection we investigate symmetric properties of pivotal operations and present some characterizations.

\subsection{Preliminaries: Clones of operations}\label{sec:preliminaries}
For any positive integer $n$, we denote by $[n]$ the set $\{1, \ldots, n\}$. For a nonempty set $A$, a function $f\colon A^n \to A$ 
is called an \emph{$n$-ary operation} on $A$. We denote by $\Clf{n}{A}$ the set of $n$-ary operations on $A$ and 
by $\Cl_A=\bigcup_{n\geq 1}\Clf{n}{A}$ the set of \emph{operations} on $A$.  
For any $f \in \Clf{n}{A}$, $S\subseteq [n]$ and $\bfa\in A^n$ we define the \emph{$S$-section $f_S^{\bfa}$ of $f$} as the $|S|$-ary operation on $A$ defined by  
$f_S^{\bfa}(\bfx)=f(\bfa_S^{\bfx})$, where $\bfa_S^{\bfx}$ is the $n$-tuple whose $i$-th coordinate is $x_i$, if $i\in S$, and $a_i$, otherwise.  
For $k \in [n]$, we say that the $k$-th argument of $f\in \Clf{n}{A}$ is \emph{essential} if there is a tuple $\bfb\in A^n$ such that $f_{k}^{\bfb}$ is non-constant.
Otherwise, we way that it is \emph{inessential}.

A \emph{clone} on $A$ is a set $\mathcal{C} \subseteq \mathcal{O}_A$  of operations on $A$ that 
\begin{enumerate}
 \item contains all projections  on $A$, i.e., 
operations $p^n_i\colon A^n\to A$ given by 
$$
p^n_i(x_1,\dots,x_n)=x_i,\, \text{ for $i\in [n]$, and}
$$
\item is closed under taking functional compositions, i.e., if $f\in C\cap \Clf{n}{A}$ and $g_1,\dots, g_n\in C\cap \Clf{m}{A}$, then their \emph{composition}
$ f(g_1,\dots, g_n)\in \Clf{m}{A}$ that is defined by   
\begin{eqnarray*}
 f(g_1,\dots, g_n)(\mathbf{x})=f(g_1(\mathbf{x}),\dots, g_n(\mathbf{x}))\quad (\bfx \in A^m)
\end{eqnarray*}
also belongs to $C$.
\end{enumerate}
In the case when $A$ is finite, the set of all clones on $A$ forms an algebraic lattice, 
where the lattice operations are the following: meet is the intersection, join is the smallest clone that contains the union. 
The greatest element is the clone $\mathcal{O}_A$ of all operations on $A$; the least element is the clone $\mathcal{J}_A$ of all projections on $A$. 
For sets $A$ of cardinality at least $3$, this lattice is uncountable, and its structure remains a topic of current research; see, e.g., \cite{DW,Lau}.
In the case when $\lvert A \rvert = 2$, the lattice of clones on $A$ is countably infinite, and it was completely described by E.~Post \cite{Post}.
In particular, it follows that each Boolean clone can be generated by a finite set of Boolean functions. For instance, 
\begin{itemize}
 \item the clone $\mathcal{O}_{\{0,1\}}$ of all Boolean functions can be generated by $\{\neg,\wedge\}$ or, equivalently, by $\{0,\neg,\med\}$;
  \item the clone $M$ of all monotone Boolean functions, i.e., verifying 
  $\mathbf{x}\leq \mathbf{y}\implies f(\mathbf{x})\leq f(\mathbf{y})$, 
  can be generated by $\{0,1,\wedge,\vee\}$ or, equivalently, by $\{0,1,\med\}$;
   \item the clone $SM$ of all self-dual monotone Boolean functions, i.e., monotone operations verifying 
  $ f(\neg \mathbf{x})=\neg f(\mathbf{x})$, 
  is  generated by  $\{\med\}$.
\end{itemize}
 For further background see, e.g.,  \cite{DW,Lau}.

\subsection{Pivotal operations and pivotally decomposable classes}
\label{sec:Pivotal1}

In what follows, $A$ denotes an arbitrary fixed nonempty set, and $0$ and $1$ are two fixed elements of $A$.  
In the setting of operations, the notion of pivotal operation  $\Pi$ and that of $\Pi$-decomposable operation can be defined as follows.

\begin{definition}[Definition 2.1 in \cite{Marichal2014}] 
A \emph{pivotal operation} on $A$ is a ternary operation $\Pi$ on $A$ that satisfies the equation
\begin{equation}\label{eqn:pri}
\Pi(x,y,y)\ = \ y.
\end{equation}
If $\Pi$ is a pivotal operation, then $f\in \Clf{n}{A}$  is \emph{$\Pi$-decomposable} if 
\begin{equation}\label{eqn:dec}
f(\bfx)\ = \ \Pi(x_i, f(\bfx_i^1), f(\bfx_i^0)), \qquad \bfx \in A^n, i\in [n].
\end{equation}
Also, we denote by $\Lambda_\Pi$ the class of $\Pi$-decomposable operations on $A$.
\end{definition}

Note that condition \eqref{eqn:pri} ensures that $\Pi$-decomposability of an operation does not depend on its inessential arguments. 
Indeed, if the $i$th argument of $f$ is inessential, then  $f(\bfx)=f(\bfx_i^1)=f(\bfx_i^0)$ for every $\bfx \in A^n$. 
It follows from  (\ref{eqn:pri}) that $f(\bfx)\ = \ \Pi(x_i, f(\bfx_i^1), f(\bfx_i^0))$ for any $\bfx \in A^n$. In particular, we can state the following result.

\begin{lemma}\label{lem:sta01}
If $\Pi$ is a pivotal operation, then every constant operation on $A$ is $\Pi$-decomposable.
\end{lemma}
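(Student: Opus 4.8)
The plan is to show that an arbitrary constant operation $c_a \in \Clf{n}{A}$, defined by $c_a(\bfx) = a$ for all $\bfx \in A^n$ (where $a \in A$ is fixed), satisfies the $\Pi$-decomposition identity \eqref{eqn:dec} for every pivotal operation $\Pi$. The key observation is that every argument of a constant operation is inessential, so this is really just a direct application of the remark preceding the lemma. First I would fix an arbitrary constant operation $c_a$ and an arbitrary index $i \in [n]$, and compute the sections $c_a(\bfx_i^1)$ and $c_a(\bfx_i^0)$.

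The crucial computation is immediate: since $c_a$ returns $a$ regardless of its input, we have $c_a(\bfx_i^1) = a = c_a(\bfx_i^0)$ for every $\bfx \in A^n$. Substituting these into the right-hand side of \eqref{eqn:dec} gives $\Pi(x_i, c_a(\bfx_i^1), c_a(\bfx_i^0)) = \Pi(x_i, a, a)$. Now I would invoke the defining equation \eqref{eqn:pri} of a pivotal operation, namely $\Pi(x, y, y) = y$, with $x = x_i$ and $y = a$, to conclude that $\Pi(x_i, a, a) = a = c_a(\bfx)$. This verifies the decomposition identity for $c_a$ at the index $i$, and since $i$ was arbitrary, $c_a$ is $\Pi$-decomposable.

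I do not anticipate any genuine obstacle here, since the statement is essentially an immediate corollary of the remark that $\Pi$-decomposability is insensitive to inessential arguments, combined with the fact that constant operations have only inessential arguments. If I wanted to present the argument without re-deriving the section values by hand, I could alternatively just cite the preceding remark directly: a constant operation has every argument inessential, hence \eqref{eqn:dec} holds for each $i$ by the observation following the definition. The only point requiring a modicum of care is to handle the degenerate case of a nullary or unary constant appropriately within the convention $\Cl_A = \bigcup_{n \geq 1} \Clf{n}{A}$, but since the decomposition \eqref{eqn:dec} is quantified over $i \in [n]$ with $n \geq 1$, the unary case $n = 1$ already falls under the same one-line verification, and no separate treatment is needed.
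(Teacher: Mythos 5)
Your proof is correct and takes essentially the same route as the paper: the paper states this lemma as an immediate consequence of the preceding remark that $\Pi$-decomposability is insensitive to inessential arguments, which rests on exactly your computation $\Pi(x_i, a, a) = a$ via \eqref{eqn:pri}. Nothing is missing.
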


\subsection{Normal form representations induced by pivotal decompositions}
\label{sec:Pivotal2}
Note that if an operation $f$ is $\Pi$-decomposable, then  we arrive at a representation of $f$
by an expression built from the pivotal operation $\Pi$ and applied to variables and constants, by iterating its $\Pi$-decomposition expression \eqref{eqn:dec}. 
This fact motivates the following notion of 
$\Pi$-normal form.

\begin{definition}
Let $\Pi \in \Clf{3}{A}$. We define the classes of \emph{$k$-ary $\Pi$-normal forms $N_{\Pi}^k$} inductively on $k\geq 0$  by the following rules.
\begin{enumerate}
\item $N_{\Pi}^0=\Clf{0}{A}$.
\item For any $k\geq 0$, the class $N_{\Pi}^{k+1}$ is defined by 
$$N_{\Pi}^{k+1}=\{\Pi(x_{k+1}, g, g')\mid g,g' \in N_{\Pi}^k\}.$$ 
\end{enumerate}
We denote by $N_\Pi$ the class $\bigcup_{k\geq 0}N_\Pi^k$ of the \emph{$\Pi$-normal forms}.
\end{definition}

Observe that $N_\Pi^k\subseteq \Clf{k}{A}$ for every $k \geq 0$. By repeated applications of \eqref{eqn:dec}, we get the following result.

\begin{proposition}\label{prop:norm}
If $\Pi$ is a pivotal operation, then $\Lambda_\Pi\subseteq N_\Pi$.
\end{proposition}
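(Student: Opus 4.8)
The plan is to prove, by induction on the arity $n \geq 0$, the graded statement that every $\Pi$-decomposable operation in $\Clf{n}{A}$ belongs to $N_\Pi^n$, adopting the convention that a nullary operation is vacuously $\Pi$-decomposable; taking the union over $n$ then gives $\Lambda_\Pi \subseteq N_\Pi$. The base case $n=0$ is immediate, since the decomposition condition \eqref{eqn:dec} ranges over $i\in[n]$ and is therefore empty for nullary operations, so that $\Lambda_\Pi\cap\Clf{0}{A}=\Clf{0}{A}=N_\Pi^0$ by the very definition of $N_\Pi^0$. This is also the floor at which the recursion will bottom out, when a unary operation is decomposed into constants.

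For the inductive step I would fix a $\Pi$-decomposable $f\in\Clf{n}{A}$ with $n\geq 1$ and instantiate \eqref{eqn:dec} with the highest-indexed variable $x_n$ as pivot, obtaining
$$f(\bfx)=\Pi\bigl(x_n,\,f(\bfx_n^1),\,f(\bfx_n^0)\bigr),\qquad \bfx\in A^n.$$
Neither $f(\bfx_n^1)$ nor $f(\bfx_n^0)$ depends on $x_n$, so each may be read as an $(n-1)$-ary operation, namely the section $g=f_{[n-1]}^{\bfa}$ with $a_n=1$ and the section $g'=f_{[n-1]}^{\bfb}$ with $b_n=0$. Granting the claim below that $g$ and $g'$ are again $\Pi$-decomposable, the induction hypothesis yields $g,g'\in N_\Pi^{n-1}$, and the displayed identity then exhibits $f$ as $\Pi(x_n,g,g')$ with $g,g'\in N_\Pi^{n-1}$; by the defining rule of $N_\Pi^{n}$ (with $k=n-1$) this means $f\in N_\Pi^n$, as required.

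The crux is the auxiliary claim that fixing the last coordinate of a $\Pi$-decomposable operation again produces a $\Pi$-decomposable operation, and this is precisely where the full force of \eqref{eqn:dec} — that the decomposition holds for \emph{every} pivot index $i$, not merely for one — is used. To establish it I would fix $c\in\{0,1\}$, set $g(\mathbf{y})=f(\mathbf{y},c)$ for $\mathbf{y}\in A^{n-1}$, and verify \eqref{eqn:dec} for $g$ at an arbitrary index $i\in[n-1]$ by instantiating \eqref{eqn:dec} for $f$ at the same index $i$ on the tuple $(\mathbf{y},c)$. Since $i\neq n$, forming the $i$-th sections commutes with fixing the $n$-th coordinate to $c$, that is, $(\mathbf{y},c)_i^1=(\mathbf{y}_i^1,c)$ and likewise for $0$, so the two sides match term by term. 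I expect this verification to be the only genuinely delicate point; it reduces to careful bookkeeping of sections rather than to any real difficulty.
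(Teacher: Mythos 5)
Your proof is correct and is essentially the paper's argument: the paper disposes of this proposition with the single line ``by repeated applications of \eqref{eqn:dec}'', and your induction on arity—pivoting on $x_n$, checking that the sections $f(\bfx_n^1)$, $f(\bfx_n^0)$ remain $\Pi$-decomposable, and bottoming out at $N_\Pi^0=\Clf{0}{A}$—is precisely the formalization of that iteration. Your ``crux'' lemma on sections is the detail the paper leaves implicit, and you verify it correctly.
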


\subsection{Symmetric pivotal operations}
\label{sec:Pivotal3}

As we will see later in the paper, a pivotal operation $\Pi$ is not necessarily  $\Pi$-decomposable. 
However, when it is, then it verifies certain symmetry properties. Consider the following equations:
\begin{gather}
\Pi(x,y,z)=\Pi(z,x,y)\label{eqn:sym01},\\
\Pi(x,y,z)=\Pi(z,y,x)\label{eqn:sym02},\\
\Pi(x,1,0)=x \label{eqn:ex01},
\end{gather}
Clearly, $\Pi$ is  symmetric if and only if it satisfies \eqref{eqn:sym01} and \eqref{eqn:sym02}. The following result states that if $\Pi\in \Lambda_\Pi$ and satisfies  \eqref{eqn:sym01} and \eqref{eqn:ex01}, then it is symmetric.
\begin{proposition}\label{prop:sym01}
If $\Pi$ is a $\Pi$-decomposable pivotal operation that satisfies \eqref{eqn:ex01} and \eqref{eqn:sym01}, then it satisfies \eqref{eqn:sym02}. In particular, $\Pi$ is a symmetric operation.
\end{proposition}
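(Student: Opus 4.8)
The plan is to exploit the hypothesis that $\Pi$ is itself $\Pi$-decomposable. Viewing $\Pi$ as the ternary operation $f(x,y,z)=\Pi(x,y,z)$ and applying \eqref{eqn:dec} to $f=\Pi$ yields one decomposition identity for each argument $i\in\{1,2,3\}$. I would decompose $\Pi$ along its \emph{middle} argument, obtaining
\[
\Pi(x,y,z)=\Pi\bigl(y,\Pi(x,1,z),\Pi(x,0,z)\bigr),
\]
so that \eqref{eqn:sym02} reduces to showing that the two boundary sections $\Pi(\cdot,1,\cdot)$ and $\Pi(\cdot,0,\cdot)$ are symmetric in their outer arguments, namely $\Pi(x,1,z)=\Pi(z,1,x)$ and $\Pi(x,0,z)=\Pi(z,0,x)$. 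Indeed, decomposing $\Pi(z,y,x)$ along its middle argument in the same way and substituting these two identities gives $\Pi(x,y,z)=\Pi(z,y,x)$ at once. The identification of this middle-argument strategy is the key idea of the argument.

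As a preliminary step I would record the values of $\Pi$ on the relevant tuples with entries in $\{0,1\}$. Directly from \eqref{eqn:pri} we have $\Pi(x,1,1)=1$, and combining \eqref{eqn:pri} with the cyclic symmetry \eqref{eqn:sym01} gives $\Pi(y,y,x)=y$, whence $\Pi(0,0,z)=0$. Combining the normalization \eqref{eqn:ex01} with \eqref{eqn:sym01} gives $\Pi(1,0,z)=\Pi(z,1,0)=z$.

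With these in hand, the first boundary identity follows by decomposing $\Pi(x,1,z)$ along its \emph{third} argument,
\[
\Pi(x,1,z)=\Pi\bigl(z,\Pi(x,1,1),\Pi(x,1,0)\bigr)=\Pi(z,1,x),
\]
using $\Pi(x,1,1)=1$ and $\Pi(x,1,0)=x$ from \eqref{eqn:ex01}. The second identity follows by decomposing $\Pi(x,0,z)$ along its \emph{first} argument,
\[
\Pi(x,0,z)=\Pi\bigl(x,\Pi(1,0,z),\Pi(0,0,z)\bigr)=\Pi(x,z,0),
\]
using $\Pi(1,0,z)=z$ and $\Pi(0,0,z)=0$, followed by one application of \eqref{eqn:sym01} to rewrite $\Pi(x,z,0)=\Pi(z,0,x)$. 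Substituting both identities into the middle-argument decomposition yields \eqref{eqn:sym02}. The concluding clause is then immediate: \eqref{eqn:sym01} and \eqref{eqn:sym02} correspond respectively to a $3$-cycle and a transposition of the argument positions, and these generate all of $S_3$, so invariance under both means $\Pi$ is symmetric.

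I expect the only genuine subtlety to be the bookkeeping of \emph{which} argument to decompose along for each boundary section and in \emph{which} direction to apply the cyclic rewrite. The $1$-section reduces cleanly through the third argument precisely because \eqref{eqn:ex01} supplies $\Pi(x,1,0)=x$ on the nose, whereas the $0$-section is not matched directly by \eqref{eqn:ex01}; it must instead be decomposed through the first argument and then rotated once via \eqref{eqn:sym01} to close the loop. Choosing the wrong argument leaves residual sections such as $\Pi(0,1,z)$ that do not collapse to constants or normalized values, so the main care needed is to track each cyclic application in the correct orientation rather than to overcome any deep obstruction.
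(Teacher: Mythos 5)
Your proof is correct: every step is a legitimate application of \eqref{eqn:dec}, \eqref{eqn:pri}, \eqref{eqn:ex01} or \eqref{eqn:sym01}, and the closing remark that a $3$-cycle and a transposition generate $S_3$ is exactly the observation the paper makes just before stating the proposition. The route, however, is organized differently from the paper's. The paper expands $\Pi(x,y,z)$ along its \emph{first} argument, then further decomposes the two resulting sections $\Pi(1,y,z)$ (along $y$) and $\Pi(0,y,z)$ (along $z$), simplifies the innermost $0/1$ entries to reach $\Pi\bigl(x,\Pi(z,y,1),\Pi(z,y,0)\bigr)$, and recognizes this as the third-argument decomposition of $\Pi(z,y,x)$ --- a single nested chain of five equalities, \eqref{eqn:frt01}--\eqref{eqn:frt05}. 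You instead pivot on the \emph{middle} argument, which reduces \eqref{eqn:sym02} to the outer-symmetry of the two boundary sections, $\Pi(x,1,z)=\Pi(z,1,x)$ and $\Pi(x,0,z)=\Pi(z,0,x)$, each dispatched by one further decomposition plus your preliminary values $\Pi(x,1,1)=1$, $\Pi(1,0,z)=z$, $\Pi(0,0,z)=0$. The toolkit is identical, but your version is more modular and easier to audit: the two boundary identities stand alone, the cyclic rewrites are localized, and the final substitution is manifestly symmetric in $x$ and $z$. It also sidesteps a cosmetic flaw in the paper, whose justification of steps \eqref{eqn:frt03} and \eqref{eqn:frt04} cites \eqref{eqn:sym02} --- the very identity being proved --- where \eqref{eqn:sym01} is evidently meant; your argument only ever invokes \eqref{eqn:sym01}, so no such circularity can even be suspected. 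What the paper's chain buys in exchange is brevity, since it consumes the helper values inline rather than isolating them as separate facts.
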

\begin{proof}
We obtain successively
\begin{eqnarray}
\Pi(x,y,z) & = & \Pi(x, \Pi(1,y,z), \Pi(0,y,z))\label{eqn:frt01}\\
 & = & \Pi(x, \Pi(y, \Pi(1,1,z),\Pi(1,0,z)), \Pi(z, \Pi(0,y,1), \Pi(0,y,0)))\label{eqn:frt02}\\
& = & \Pi(x, \Pi(y,1,z), \Pi(z,y,0))\label{eqn:frt03}\\
& = & \Pi(x, \Pi(z,y,1), \Pi(z,y,0))\label{eqn:frt04}\\
& = & \Pi(z,y,x).\label{eqn:frt05}
\end{eqnarray}
where \eqref{eqn:frt01}, \eqref{eqn:frt02} and \eqref{eqn:frt05} were obtained by $\Pi$-decomposability of $\Pi$, and where  \eqref{eqn:frt03} and \eqref{eqn:frt04} were obtained by \eqref{eqn:pri}, \eqref{eqn:ex01} and \eqref{eqn:sym02}. 
\end{proof}

Under the assumption of $\Pi$-decomposability of $\Pi$ and  \eqref{eqn:ex01}, symmetry of a pivotal operation $\Pi$  can be characterized in the following way.
\begin{theorem}\label{thm:symsy}
Le $\Pi$ be a $\Pi$-decomposable pivotal operation that satisfies \eqref{eqn:ex01} .  The following conditions are equivalent. 
\begin{enumerate}[$(i)$]
\item\label{it:uyt01} $\Pi$ is symmetric.
\item\label{it:uyt02} $\Pi$ satisfies the  equations
\[
\Pi(0,1,0)~=~\Pi(0,0,1) \quad \text{ and } \quad \Pi(1,1,0)~=~\Pi(1,0,1).
\]
\end{enumerate}
\end{theorem}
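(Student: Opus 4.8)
The plan is to handle the two implications separately. The forward direction $\ref{it:uyt01}\Rightarrow\ref{it:uyt02}$ is immediate: a symmetric $\Pi$ is in particular invariant under the transposition of its last two arguments, which gives at once $\Pi(0,1,0)=\Pi(0,0,1)$ and $\Pi(1,1,0)=\Pi(1,0,1)$. All the work lies in the converse, which I would organise in three steps.

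First I would determine $\Pi$ on the Boolean cube $\{0,1\}^3$. The pivot law \eqref{eqn:pri} together with equation \eqref{eqn:ex01} already fixes six of the eight values $\Pi(a,b,c)$ with $a,b,c\in\{0,1\}$; the two equalities of \ref{it:uyt02} then fix the remaining $\Pi(0,0,1)$ and $\Pi(1,0,1)$, and a direct inspection shows that the resulting table is exactly that of the median $\med$, hence symmetric on $\{0,1\}^3$. This is the only place where hypothesis \ref{it:uyt02} is used, and it is what rules out the asymmetric behaviour otherwise possible.

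Second, I would propagate this Boolean information to arbitrary arguments. As $\Pi\in\Lambda_\Pi$, the decomposition \eqref{eqn:dec} may be applied on any coordinate, and the idea is to pivot on the coordinate whose two inner sections carry a constant forcing their leaves down to Boolean corners with already known values. In this way, pivoting $\Pi(1,y,z)$ and $\Pi(0,y,z)$ on the middle argument (and expanding the inner leaves on the last one) gives $\Pi(1,y,z)=\Pi(y,1,z)$ and $\Pi(0,y,z)=\Pi(y,z,0)$, while pivoting $\Pi(x,y,1)$ and $\Pi(x,y,0)$ on the middle argument gives $\Pi(x,y,1)=\Pi(y,1,x)$ and $\Pi(x,y,0)=\Pi(y,x,0)$. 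Each such identity is just one or two applications of \eqref{eqn:dec} followed by substitution of the median values from the first step.

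Third, I would assemble these reductions into two genuine transpositions. Expanding $\Pi(x,y,z)$ by \eqref{eqn:dec} on the first argument, rewriting the two sections through the first pair of identities and a further pivot, and then contracting again collapses the expression to $\Pi(x,z,y)$, i.e. invariance under the transposition of the last two arguments; expanding instead on the third argument and using the second pair of identities gives $\Pi(x,y,z)=\Pi(y,x,z)$, invariance under the transposition of the first two. Since these two adjacent transpositions generate the symmetric group $S_3$, the operation $\Pi$ is symmetric, which is \ref{it:uyt01}. (Alternatively one could aim for the cyclic law \eqref{eqn:sym01} and then quote Proposition \ref{prop:sym01}, but the route through the two adjacent transpositions is self-contained.) I expect the main obstacle to be the self-referential character of \eqref{eqn:dec}: because it expresses $\Pi$ through $\Pi$, a careless expansion merely reproduces $\Pi$ on mixed inputs and loops. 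The delicate point is to choose, at each step, the pivot coordinate that drives the inner leaves to Boolean corners whose values are already pinned down in the first step, and to check that \ref{it:uyt02} is precisely what makes the median table emerge, so that sections such as $\Pi(y,1,z)$ and $\Pi(y,z,0)$ simplify as required.
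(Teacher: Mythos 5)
Your proposal is correct and takes essentially the same route as the paper's own proof: the paper likewise uses \eqref{eqn:pri}, \eqref{eqn:ex01} and condition $(ii)$ to pin down the Boolean values, derives constant-slot transposition identities (in its case $\Pi(x,1,0)=\Pi(x,0,1)$, $\Pi(x,y,0)=\Pi(x,0,y)$, $\Pi(x,y,1)=\Pi(x,1,y)$, $\Pi(x,y,0)=\Pi(y,x,0)$, $\Pi(x,y,1)=\Pi(y,x,1)$) by pivoting on suitably chosen coordinates, and then assembles them into the two adjacent transpositions $\Pi(x,y,z)=\Pi(x,z,y)$ and $\Pi(x,y,z)=\Pi(y,x,z)$, which generate $S_3$. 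Your intermediate identities differ from the paper's only in which coordinate carries the constant, and each pivot you sketch does close up as claimed.
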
 
\begin{proof}
It is clear that (\ref{it:uyt01}) implies (\ref{it:uyt02}). Let us prove that (\ref{it:uyt02}) implies (\ref{it:uyt01}). It suffices to prove that $\Pi$ satisfies  
\begin{align}
\Pi(x,y,z)&=~\Pi(x,z,y),\label{eqn:sym02bis01}\\
\Pi(x,y,z)&=~\Pi(y,x,z).\label{eqn:sym02bis02}
\end{align}
First, note that for every $x\in A$  we obtain successively 
\begin{align}
\Pi(x, 1, 0)& =~\Pi(x, \Pi(1,1,0), \Pi(0,1,0))~=~\Pi(x, \Pi(1,0,1), \Pi(0,0,1))\nonumber\\
& =~\Pi(x,0,1),\label{eq:ggg01}
\end{align}
where the first identity is obtained by \eqref{eqn:sym02}, the second by contition (\ref{it:uyt02}) and the last one by $\Pi$-decomposability of $\Pi$.
Then, for every $y \in A$ we have
\begin{align}
\Pi(x,y,0) &=~\Pi(y, \Pi(x,1,0), \Pi(x,0,0))~=~\Pi(y, \Pi(x,0,1), \Pi(x,0,0)) \label{eq:ggg0200}\\
& =~\Pi(x,0,y),\label{eq:ggg02}
\end{align}
where the first and last identities are obtained by  decomposability of $\Pi$, and the second by \eqref{eq:ggg01}. Using a similar argument, we obtain
\begin{equation}\label{eq:ggg03}
\Pi(x,y,1)~=~\Pi(x,1,y).
\end{equation}
Finally, we obtain for any $z\in A$
\begin{align}
\Pi(x,y,z) &=~\Pi(z, \Pi(x,y,1), \Pi(x,y,0))~=~\Pi(z, \Pi(x,1,y), \Pi(x,0,y))\label{eq:ggg04} \\
&=~\Pi(x,z,y),\nonumber
\end{align}
where the first and last identities are obtained by  decomposability \eqref{eqn:dec} of $\Pi$, and the second by \eqref{eq:ggg02} and \eqref{eq:ggg03}. This proves that \eqref{eqn:sym02bis01} holds. Now, using \eqref{eqn:ex01} and \eqref{eqn:pri} in the first identity in \eqref{eq:ggg0200} we obtain that 
\begin{equation}\label{eq:ggg05}
\Pi(x,y,0)~=~\Pi(y,x,0).
\end{equation}
Similarly, we have
\begin{align}
\Pi(x,y,1)&=~\Pi(y,\Pi(x,1,1), \Pi(x,0,1))~=~\Pi(y,1, x),\nonumber	\\
&=~\Pi(y,x,1),\label{eq:ggg06}
\end{align}
where the first identity is obtained by decomposability of $\Pi$,  the second by \eqref{eqn:pri}, \eqref{eq:ggg01} and \eqref{eqn:ex01}, and the last one by \eqref{eq:ggg03}. Using \eqref{eq:ggg05} and \eqref{eq:ggg06} in the first identity of \eqref{eq:ggg04}, we obtain \eqref{eqn:sym02bis02} by $\Pi$-decomposability of $\Pi$.
\end{proof}

\section{Clones of pivotally decomposable operations}
\label{sec:Clones}

In Subsection \ref{sub:suff}, we provide sufficient conditions on a pivotal operation $\Pi$ for $\Lambda_\Pi$ to be a clone, 
and in Subsection \ref{sub:nec}, we prove that these conditions are also necessary under the assumption that $\Pi$ belongs to $\Lambda_\Pi$, and satisfies two additional equations \eqref{eqn:fine01} and \eqref{eqn:fine02} that involves only $\Pi$, and the elements $0$ and $1$.
Certain natural questions are also discussed  and answered negatively by counter-examples that are constructed in Subsection \ref{sub:nec2}. 

\subsection{Sufficient conditions for $\Lambda_\Pi$ to be a clone}\label{sub:suff}

Let us consider the following equations:
\begin{gather}
\Pi(\Pi(x,y,z),t,u)=\Pi(x,\Pi(y,t,u), \Pi(z,t,u))\label{eqn:ex04}.
\end{gather}

The relevance of property (\ref{eqn:ex04}) is made apparent by the following lemma.

\begin{lemma}\label{lem:sta02}
Let $\Pi$ be a pivotal operation that satisfies \eqref{eqn:ex04}. If $f\colon A^n \to A$ and $g_1, \ldots, g_n\colon A^m\to A$ are $\Pi$-decomposable, 
then so is $f(g_1, \ldots, g_n)$.
\end{lemma}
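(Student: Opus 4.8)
The plan is to show that the composition $h := f(g_1,\dots,g_n)$ satisfies the $\Pi$-decomposition equation \eqref{eqn:dec} with respect to every coordinate $k\in[m]$, that is, to verify
\[
h(\bfx) \ = \ \Pi\bigl(x_k, h(\bfx_k^1), h(\bfx_k^0)\bigr)
\]
for all $\bfx\in A^m$ and all $k\in[m]$. The natural strategy is to fix a coordinate $k$ and to peel off the outer operation $f$ using its own $\Pi$-decomposability, but applied at a different coordinate. Concretely, since each $g_i$ is $\Pi$-decomposable at coordinate $k$, we have $g_i(\bfx)=\Pi\bigl(x_k, g_i(\bfx_k^1), g_i(\bfx_k^0)\bigr)$, so the inner tuple feeding $f$ is componentwise a $\Pi$-expression in the pivot $x_k$.

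First I would substitute these decompositions of the $g_i$ into $h(\bfx)=f\bigl(g_1(\bfx),\dots,g_n(\bfx)\bigr)$, so that every argument of $f$ becomes $\Pi\bigl(x_k, g_i(\bfx_k^1), g_i(\bfx_k^0)\bigr)$. The goal is then to pull the common pivot $x_k$ out through $f$ to the front. The key algebraic engine for this is the distributivity-type identity \eqref{eqn:ex04}, which lets one move a pivot argument of $\Pi$ across an application of $\Pi$; by iterating it over the $n$ arguments of $f$ (after expanding $f$ itself through its $\Pi$-normal form, using Proposition~\ref{prop:norm} and the inductive structure of $N_\Pi$), one should be able to collect the factor $x_k$ at the outermost position. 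The outcome of this extraction should be exactly
\[
h(\bfx) \ = \ \Pi\bigl(x_k,\ f(g_1(\bfx_k^1),\dots,g_n(\bfx_k^1)),\ f(g_1(\bfx_k^0),\dots,g_n(\bfx_k^0))\bigr),
\]
and since $g_i(\bfx_k^1)=g_i(\bfx)$ with $x_k$ replaced by $1$ and similarly for $0$, the two inner terms are precisely $h(\bfx_k^1)$ and $h(\bfx_k^0)$, which is the required decomposition.

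The main obstacle, and the step requiring the most care, is the inductive extraction of the pivot through the multivariate $f$: identity \eqref{eqn:ex04} governs nesting of $\Pi$ in only its \emph{first} argument, so I would proceed by induction on the $\Pi$-normal form depth of $f$ guaranteed by Proposition~\ref{prop:norm}, peeling one layer of $\Pi$ at a time and repeatedly applying \eqref{eqn:ex04} together with the pivotal equation \eqref{eqn:pri}. The base case uses that constants are $\Pi$-decomposable (Lemma~\ref{lem:sta01}), and the inductive step must verify that after substituting the $g_i$ and applying \eqref{eqn:ex04}, the sections $f(\bfx_k^1)$-type subexpressions reassemble correctly. A cleaner alternative, which I would keep in reserve, is to bypass normal forms and argue directly: decompose $f$ once at its first argument, substitute, apply \eqref{eqn:ex04} a single time to split off $g_1$'s pivot, and then invoke an induction on the arity $n$ of $f$, treating $f(\,\cdot\,,g_2(\bfx_1^c),\dots)$ as an $(n-1)$-ary composition. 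Either way, the crux is organizing the repeated use of \eqref{eqn:ex04} so that all copies of the pivot $x_k$ end up merged into a single outermost $\Pi$ via \eqref{eqn:pri}.
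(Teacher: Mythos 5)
Your plan founders on exactly the step you yourself call ``the crux'': merging the several copies of the pivot $x_k$ into one outermost $\Pi$. After substituting $g_i(\bfx)=\Pi\bigl(x_k,g_i(\bfx_k^1),g_i(\bfx_k^0)\bigr)$ into the arguments of $f$ and extracting the pivot coming from $g_1$ (via one use of \eqref{eqn:ex04} and $\Pi$-decomposability of $f$), what you obtain is
\[
h(\bfx)=\Pi\bigl(x_k,\ f(g_1(\bfx_k^1),g_2(\bfx),\dots,g_n(\bfx)),\ f(g_1(\bfx_k^0),g_2(\bfx),\dots,g_n(\bfx))\bigr),
\]
where the two inner terms still depend on $x_k$ through $g_2,\dots,g_n$. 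Extracting those pivots in turn produces nested terms of the shape $\Pi\bigl(x_k,\Pi(x_k,a,b),\Pi(x_k,c,d)\bigr)$ whose branches are values of $f$ at \emph{mixed} tuples such as $f\bigl(g_1(\bfx_k^1),g_2(\bfx_k^0),\dots\bigr)$; these branches are in general pairwise distinct, so \eqref{eqn:pri} (which only collapses $\Pi(x,y,y)=y$) does not apply, and \eqref{eqn:ex04} cannot help either, since it restructures nesting only in the \emph{first} argument of $\Pi$, never in the second or third. The identity you would actually need, $\Pi\bigl(x,\Pi(x,a,b),\Pi(x,c,d)\bigr)=\Pi(x,a,d)$, is not a consequence of the hypotheses. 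A concrete witness: let $\Pi=\med$ on the three-element chain $0<e<1$; writing $\med(p,t,u)=(p\wedge(t\vee u))\vee(t\wedge u)$, one checks by distributivity that both sides of \eqref{eqn:ex04} equal $(\med(x,y,z)\wedge(t\vee u))\vee(t\wedge u)$, so $\med$ is a pivotal operation satisfying \eqref{eqn:ex04} (and \eqref{eqn:ex01}), yet $\med\bigl(1,\med(1,0,e),\med(1,e,0)\bigr)=\med(1,e,e)=e\neq 0=\med(1,0,0)$. So no reorganization of \eqref{eqn:ex04} and \eqref{eqn:pri} can complete your extraction; and the same objection defeats your fallback induction on the arity $n$, because after peeling off $g_1$ the remaining terms $f\bigl(c,g_2(\bfx),\dots,g_n(\bfx)\bigr)$ still contain $x_k$, and recombining their decompositions again demands the forbidden merge.

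The idea missing from your proposal, and which is the heart of the paper's proof, is to \emph{separate the variables first}: replace each $g_i$ by $g_i'(\bfx)=g_i(x_{(i-1)m+1},\dots,x_{im})$, so that the inner functions of the composition $f(g_1',\dots,g_n')\colon A^{nm}\to A$ depend on pairwise disjoint blocks of variables. Then each coordinate $x_\ell$ occurs in exactly \emph{one} inner function, so a single application of \eqref{eqn:ex04} extracts its pivot, and the recombination into $f(g_1',\dots,g_n')(\bfx_\ell^1)$ and $f(g_1',\dots,g_n')(\bfx_\ell^0)$ via $\Pi$-decomposability of $f$ goes through precisely because the other $g_j'$ are unchanged when $x_\ell$ is set to a constant; no two copies of a pivot ever have to be merged. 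Only after the decomposition identity is established for all $nm$ coordinates does the paper identify the variables $\{x_i,x_{m+i},\dots,x_{(n-1)m+i}\}$ to recover $f(g_1,\dots,g_n)$. Without this disjointification step (or some substitute for it), the multiple-pivot problem you flagged is not a technical nuisance to be organized away but a genuine obstruction.
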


\begin{proof}
For every $i\in [n]$ let $g'_i\colon A^{nm} \to A$ be the operation defined by $g'_i(\bfx)=g_i(\bfx_i)$ 
where $\bfx_i=(x_{(i-1)m+1}, \ldots, x_{im})$. We prove that $f(g'_1, \ldots, g'_n)$ is $\Pi$-decomposable. 
For $\bfx\in A^{nm}$,  set 
\begin{align*}
\bfc_\bfx^0& = f(0,g'_2(\bfx), \ldots, g'_n(\bfx)), & \qquad
\bfc_\bfx^1&= f(1,g'_2(\bfx), \ldots, g'_n(\bfx)),\\
\bfa_\bfx^0&=g'_1(0,x_2, \ldots, x_{nm}), & \qquad
\bfa_\bfx^1&=g'_1(1,x_2, \ldots, x_{nm}).
\end{align*}
 We obtain by $\Pi$-decomposability of $f$ that \[f(g'_1(\bfx), \ldots, g'_n(\bfx))= \Pi(g'_1(\bfx),\bfc_\bfx^1, \bfc_\bfx^0).\]
By iterating the pivotal decomposition expression (to each argument), we get the following equalities
\begin{align*}
 \Pi(g'_1(\bfx),\bfc_\bfx^1, \bfc_\bfx^0) 
 & =  \Pi(\Pi(x_1, \bfa_\bfx^1,\bfa_\bfx^0),\bfc_\bfx^1, \bfc_\bfx^0),\notag\\
& =  \Pi(x_1, \Pi(\bfa_\bfx^1, \bfc_\bfx^1, \bfc_\bfx^0),\Pi(\bfa_\bfx^0, \bfc_\bfx^1, \bfc_\bfx^0) ),\notag\\
&=\Pi(x_1, f(g'_1, \ldots, g'_n)(\bfx_1^1), f(g'_1, \ldots, g'_n)(\bfx_1^0))\notag,
\end{align*}
where the first equality is obtained by $\Pi$-decomposability of $g'_1$, the second one by equation \eqref{eqn:ex04} and the last one by $\Pi$-decomposability of $f$.
Thus, we have proved that condition \eqref{eqn:dec} holds for $f(g'_1, \ldots, g'_n)$ and $i=1$. We can proceed in a similar way  to obtain
\begin{equation}\label{eq:bnh}
f(g'_1(\bfx), \ldots, g'_n(\bfx)) = \Pi(x_\ell, f(g'_1, \ldots, g'_n)(\bfx_\ell^1),\\ f(g'_1, \ldots, g'_n)(\bfx_\ell^0)), 
\end{equation}
for every $\ell \in [nm]$.
The decomposability of $\Pi(g_1, \ldots, g_n)$ follows from \eqref{eq:bnh} by identifying all arguments in $\{x_i, x_{m+i}, \ldots, x_{(n-1)m+i}\}$ for every $i\in [m]$.
\end{proof}

Similarly, if the pivotal operation satisfies equation \eqref{eqn:ex01}, then  $\Lambda_\Pi$ must contain all projections.

\begin{lemma}\label{lem:sta03}
Let  $\Pi$ be a pivotal operation. The following conditions are equivalent.
\begin{enumerate}[(i)]
\item\label{it:lkj01} $\Pi$ satisfies equation \eqref{eqn:ex01}.
\item\label{it:lkj02} $\Lambda_\Pi$ contains all  projections on $A$.
\item\label{it:lkj03} $\Lambda_\Pi$ contains the unary projection $p_1^1$.
\end{enumerate}
\end{lemma}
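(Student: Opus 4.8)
The plan is to establish the cycle of implications $(\ref{it:lkj01})\Rightarrow(\ref{it:lkj02})\Rightarrow(\ref{it:lkj03})\Rightarrow(\ref{it:lkj01})$. The middle implication is immediate, since the unary projection $p_1^1$ is one of the projections, so the bulk of the work lies in the other two directions, and in fact the first implication is where the real content sits.

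For the implication $(\ref{it:lkj01})\Rightarrow(\ref{it:lkj02})$, I would fix a projection $p_i^n$ and verify that it satisfies the $\Pi$-decomposition equation \eqref{eqn:dec} for every index $k\in[n]$. The argument splits according to whether $k=i$ or $k\neq i$. When $k\neq i$, the $k$-th argument of $p_i^n$ is inessential, so the observation recorded just after the definition of $\Pi$-decomposability (which uses only \eqref{eqn:pri}) already gives $p_i^n(\bfx)=\Pi(x_k,p_i^n(\bfx_k^1),p_i^n(\bfx_k^0))$. When $k=i$, I compute $p_i^n(\bfx_i^1)=1$ and $p_i^n(\bfx_i^0)=0$, so the right-hand side of \eqref{eqn:dec} becomes $\Pi(x_i,1,0)$, which equals $x_i=p_i^n(\bfx)$ precisely by equation \eqref{eqn:ex01}. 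Combining the two cases shows $p_i^n\in\Lambda_\Pi$.

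For the implication $(\ref{it:lkj03})\Rightarrow(\ref{it:lkj01})$, I would simply unwind what membership of $p_1^1$ in $\Lambda_\Pi$ means. Writing \eqref{eqn:dec} for $f=p_1^1$, $n=1$, $i=1$, and an arbitrary $x\in A$, we get $x=p_1^1(x)=\Pi(x,p_1^1(1),p_1^1(0))=\Pi(x,1,0)$, which is exactly \eqref{eqn:ex01}.

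None of the three steps presents a genuine obstacle; this is essentially a bookkeeping lemma that isolates the role of \eqref{eqn:ex01} for the presence of projections. The only point that requires a small amount of care is the case distinction in $(\ref{it:lkj01})\Rightarrow(\ref{it:lkj02})$, namely making sure that the inessential-argument case is handled by \eqref{eqn:pri} alone and that \eqref{eqn:ex01} is invoked exactly in the essential case $k=i$. Once this is laid out cleanly, the equivalences follow.
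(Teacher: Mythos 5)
Your proposal is correct and follows essentially the same route as the paper: the same cycle of implications, with \eqref{eqn:pri} handling the case $k\neq i$ (the paper computes $\Pi(x_k,x_i,x_i)=x_i$ directly, which is the content of the inessential-argument observation you cite), \eqref{eqn:ex01} handling $k=i$, and the same one-line unwinding of \eqref{eqn:dec} for $p_1^1$ in the reverse direction.
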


\begin{proof}
\eqref{it:lkj01} $\implies$ \eqref{it:lkj02}:
Let $n\geq 1$ and $k \in [n]$. For every $i\in [n]$ such that $i\neq k$ and for every $\bfx \in A^n$,
\[
\Pi(x_i, p^n_k(\bfx_i^1), p^n_k(\bfx_i^0))=\Pi(x_i, x_k, x_k)=x_k
\]
where the last equality is obtained by \eqref{eqn:pri}. If $i=k$, then
\[
\Pi(x_i, p^n_k(\bfx_i^1), p^n_k(\bfx_i^0))=\Pi(x_k,1,0)=x_k
\]
where the last equality is obtained by \eqref{eqn:ex01}. We conclude that $p_k^n\in \Lambda_\Pi$.

\eqref{it:lkj02} $\implies$ \eqref{it:lkj03}: Trivial. 

\eqref{it:lkj03} $\implies$ \eqref{it:lkj01}:  If $\Lambda_\Pi$ contains the unary projection $p_1^1$, then for every $x\in A$ we have 
\[
x =p_1^1(x)=\Pi(x,1,0).
\]
Thus $\Pi$ satisfies equation \eqref{eqn:ex01}, and the proof of the lemma is now complete.
\end{proof}

By combining Lemmas \ref{lem:sta01}, \ref{lem:sta02}, and \ref{lem:sta03}, we obtain the following result.

\begin{proposition}\label{prop:clo}
Suppose that $\Pi$ is a pivotal operation that satisfies  equation  \eqref{eqn:ex04}. Then $\Lambda_\Pi$ is a clone if and only if 
$\Pi$ satisfies  equation  
\eqref{eqn:ex01}. In the latter case, $\Lambda_\Pi$ is a clone that contains all constant operations.
\end{proposition}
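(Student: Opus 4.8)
The plan is to establish Proposition~\ref{prop:clo} by assembling the three preceding lemmas, which together already isolate the two clone axioms. Since $\Pi$ is assumed to satisfy \eqref{eqn:ex04}, Lemma~\ref{lem:sta02} guarantees that $\Lambda_\Pi$ is closed under functional composition; this closure holds unconditionally under the standing hypothesis and plays no role in the equivalence. Thus the only axiom left to control is the presence of all projections, and this is exactly what Lemma~\ref{lem:sta03} characterizes: $\Lambda_\Pi$ contains all projections if and only if $\Pi$ satisfies \eqref{eqn:ex01}.

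The argument then splits into the two implications of the stated equivalence. For the forward direction, I would assume $\Lambda_\Pi$ is a clone; then by definition it contains all projections, so in particular it contains $p_1^1$, and Lemma~\ref{lem:sta03} (the implication \eqref{it:lkj03}$\implies$\eqref{it:lkj01}, or simply \eqref{it:lkj02}$\implies$\eqref{it:lkj01}) yields \eqref{eqn:ex01}. For the converse, I would assume \eqref{eqn:ex01} holds; then Lemma~\ref{lem:sta03} gives that $\Lambda_\Pi$ contains all projections, and Lemma~\ref{lem:sta02} (applicable because \eqref{eqn:ex04} is assumed) gives closure under composition. These two facts are precisely conditions (1) and (2) in the definition of a clone, so $\Lambda_\Pi$ is a clone.

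For the final assertion about constant operations, I would simply invoke Lemma~\ref{lem:sta01}, which states that whenever $\Pi$ is pivotal every constant operation lies in $\Lambda_\Pi$. Since this holds with no additional hypotheses beyond $\Pi$ being pivotal, it applies in the ``latter case'' where $\Pi$ additionally satisfies \eqref{eqn:ex01}, and so $\Lambda_\Pi$ contains all constant operations.

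There is essentially no obstacle here: the proposition is a bookkeeping corollary of the three lemmas, and the entire content has already been discharged. The only point requiring minor care is to make explicit that the closure-under-composition half of the clone definition is supplied by \eqref{eqn:ex04} alone and is therefore independent of the equivalence being proved, so that the biconditional genuinely hinges only on the projection axiom controlled by \eqref{eqn:ex01}.
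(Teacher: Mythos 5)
Your proof is correct and is essentially the paper's own argument: the paper proves this proposition precisely ``by combining Lemmas~\ref{lem:sta01}, \ref{lem:sta02}, and \ref{lem:sta03}'', which is exactly the assembly you carry out (Lemma~\ref{lem:sta02} for closure under composition, Lemma~\ref{lem:sta03} for the equivalence with the projection axiom, Lemma~\ref{lem:sta01} for the constants). No discrepancies to report.
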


We illustrate the previous results by  analyzing  the particular  case of Boolean functions.
\begin{example}\label{ex:bool}
Let $\Pi$ be a Boolean pivotal operation such that $\Lambda_\Pi$ is a clone. According to Proposition \ref{lem:sta03}, the operation $\Pi$ satisfies equation  \eqref{eqn:ex01}.
Hence, the unary sections $\Pi(x,0,0)$ and $\Pi(x,1,1)$ are determined by \eqref{eqn:pri} while the value of the section $\Pi(x,1,0)$ is determined by  \eqref{eqn:ex01}:
\begin{equation}\label{eqn:booltriv}
\Pi(x,0,0)=0, \qquad \Pi(x,1,1)=1, \quad \Pi(x,1,0)=x.
\end{equation}
Moreover, it is not difficult to check that the four possibilities for the unary section $\Pi(x,0,1)$, namely,
\begin{align*}
\Pi_0(x,0,1)=x, & \qquad
\Pi_1(x,0,1)=\overline{x},\\
\Pi_2(x,0,1)=0, & \qquad
\Pi_3(x,0,1)=1,
\end{align*}
 give rise to  operations $\Pi_0, \ldots, \Pi_3$ that satisfy equation \eqref{eqn:ex04}. 
Simple computations then  show that we must have 
\begin{align*}
\Pi_0(x,y,z)&=(x\wedge y)\vee (x\wedge z) \vee (y \wedge z), & \qquad \Pi_1(x,y,z)&= (x \wedge y) \vee (\overline{x} \wedge z),\\
\Pi_2(x,y,z)&=y \wedge (x \vee z), & \qquad
\Pi_3(x,y,z)&=z \vee (x \wedge y).
\end{align*}
Hence, the clones $\Lambda_{\Pi_0}, \ldots, \Lambda_{\Pi_3}$ are as follows:
\begin{enumerate}[(a)]
 \item $\Lambda_{\Pi_0}$ is the clone $M$ of all monotone Boolean functions, since $\Pi_0=\med$;
 \item $\Lambda_{\Pi_1}$ is the clone $\mathcal{O}_{\{0,1\}}$ of all Boolean functions,  since $\Pi_1$ is the pivotal operation used in Shannon decomposition;
  \item $\Lambda_{\Pi_2}$ is the clone $M$ of all monotone Boolean functions, since $\Pi_2(x,y,0)=y\wedge x$ and $\Pi_2(x,1,z)=x\vee z$, and every composition 
  of $\Pi_2$ with projections or constants is monotone;
  \item $\Lambda_{\Pi_3}$ is the clone $M$ of all monotone Boolean functions (by a similar argument to that used for  $\Lambda_{\Pi_2}$).
\end{enumerate}

The situation can be summarized by the following result.
\end{example}
\begin{proposition}
If $\mathcal{C}$ is Boolean clone, then there is a Boolean pivotal operation $\Pi$ such that $\mathcal{C}=\Lambda_\Pi$ if and only if $\mathcal{C}$ is the clone of all monotone Boolean functions or the clone of all Boolean functions.
\end{proposition}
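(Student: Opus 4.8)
The plan is to reduce both implications to the case analysis already carried out in Example~\ref{ex:bool}. The guiding observation is that the only Boolean pivotal operations whose class of decomposable operations forms a clone are those satisfying equation~\eqref{eqn:ex01}, and that these are exactly the four operations $\Pi_0,\dots,\Pi_3$; since the example already computes $\Lambda_{\Pi_i}$ for each of them, the proposition follows by collecting these computations.

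For the sufficiency direction I would exhibit an explicit witness for each admissible clone. Taking $\Pi=\med=\Pi_0$ gives $\Lambda_\Pi=M$ by part~(a) of Example~\ref{ex:bool}, so the clone of monotone Boolean functions is of the required form; taking $\Pi=\Pi_1$, the pivotal operation of Shannon decomposition, gives $\Lambda_\Pi=\mathcal{O}_{\{0,1\}}$ by part~(b), so the clone of all Boolean functions is as well.

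For the necessity direction I would start from a Boolean pivotal operation $\Pi$ with $\mathcal{C}=\Lambda_\Pi$. Since $\mathcal{C}$ is a clone it contains the unary projection $p_1^1$, so Lemma~\ref{lem:sta03} forces $\Pi$ to satisfy~\eqref{eqn:ex01}. Together with the pivotal equation~\eqref{eqn:pri}, this determines three of the four unary sections of $\Pi$ as in~\eqref{eqn:booltriv} and leaves only $\Pi(x,0,1)$ free. A short enumeration of the four unary Boolean operations that this section may equal shows that $\Pi$ must be one of $\Pi_0,\dots,\Pi_3$, and Example~\ref{ex:bool} records that each of $\Lambda_{\Pi_0},\dots,\Lambda_{\Pi_3}$ equals $M$ or $\mathcal{O}_{\{0,1\}}$. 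Hence $\mathcal{C}\in\{M,\mathcal{O}_{\{0,1\}}\}$.

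Most of the content is already packaged in Example~\ref{ex:bool}, so the argument is largely bookkeeping; the point demanding attention is the claim that satisfying~\eqref{eqn:ex01} characterizes all Boolean pivotal operations inducing a clone. This is exactly the hinge of the necessity direction: the clone axioms supply all projections, Lemma~\ref{lem:sta03} converts this into~\eqref{eqn:ex01}, and the counting argument on the single free section $\Pi(x,0,1)$ confines $\Pi$ to the four listed candidates. I anticipate no genuine obstacle beyond verifying that this enumeration of the free section is exhaustive and that each $\Pi_i$ has indeed been shown in the example to yield a clone.
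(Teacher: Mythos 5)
Your proposal is correct and follows essentially the same route as the paper: the paper offers no separate proof, presenting the proposition as a summary of Example~\ref{ex:bool}, whose content (Lemma~\ref{lem:sta03} forcing~\eqref{eqn:ex01}, the determination of three unary sections by~\eqref{eqn:pri} and~\eqref{eqn:booltriv}, the enumeration of the four candidates $\Pi_0,\dots,\Pi_3$, and the identification of each $\Lambda_{\Pi_i}$ with $M$ or $\mathcal{O}_{\{0,1\}}$) is exactly what you assemble. Your explicit split into the two implications, with $\Pi_0=\med$ and $\Pi_1$ as witnesses for sufficiency, is just the bookkeeping the paper leaves implicit.
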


\subsection{The case of a pivotally decomposable $\Pi$}\label{sub:nec}
In the section, we derive results about clones of $\Pi$-decomposable operations under the additional assumption that the operation $\Pi$ itself is $\Pi$-decomposable, i.e., that $\Pi \in \Lambda_\Pi$. 

The next result states that under this assumption, the pivotal operation 
together with constant maps suffice to construct expressions representing each member of $\Lambda_\Pi$.

\begin{proposition}\label{prop:piv}
Let $\Pi$  be a pivotal operation such that $\Lambda_\Pi$ is a clone that contains $\Pi$. 
Then $\Lambda_\Pi$ is the clone generated by $\Pi$ and the constant maps. In particular, $\Lambda_\Pi=N_\Pi$.
\end{proposition}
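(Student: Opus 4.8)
The plan is to prove both assertions at once by establishing the cycle of inclusions
\[
\Lambda_\Pi \ \subseteq\ N_\Pi \ \subseteq\ \mathcal{K} \ \subseteq\ \Lambda_\Pi,
\]
where $\mathcal{K}$ denotes the clone generated by $\Pi$ together with all constant operations. Once these three inclusions are in place, the four classes must coincide, and this yields simultaneously $\Lambda_\Pi=\mathcal{K}$ (the first claim) and $\Lambda_\Pi=N_\Pi$ (the second claim). So the whole proof reduces to verifying the three inclusions.

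The leftmost inclusion $\Lambda_\Pi\subseteq N_\Pi$ is precisely Proposition \ref{prop:norm}, since $\Pi$ is a pivotal operation; nothing new is needed there. The rightmost inclusion $\mathcal{K}\subseteq\Lambda_\Pi$ follows directly from the hypotheses: $\Lambda_\Pi$ is a clone containing $\Pi$, and by Lemma \ref{lem:sta01} it contains every constant operation. Since any clone contains all projections and is closed under composition, $\Lambda_\Pi$ must contain the least clone generated by $\Pi$ and the constants, which is exactly $\mathcal{K}$.

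The core of the argument is the middle inclusion $N_\Pi\subseteq\mathcal{K}$, which I would prove by induction on $k\geq 0$, establishing $N_\Pi^k\subseteq\mathcal{K}$. The base case is immediate, as $N_\Pi^0=\Clf{0}{A}$ consists of constants, which are among the generators of $\mathcal{K}$. For the inductive step, a typical element of $N_\Pi^{k+1}$ has the form $\Pi(x_{k+1},g,g')$ with $g,g'\in N_\Pi^k\subseteq\mathcal{K}$. Viewing $g$ and $g'$ as $(k+1)$-ary operations that disregard their last argument (a composition with projections, hence still in $\mathcal{K}$) and reading $x_{k+1}$ as the projection $p^{k+1}_{k+1}$, the operation $\Pi(x_{k+1},g,g')$ becomes a composition of $\Pi\in\mathcal{K}$ with members of $\mathcal{K}$, and therefore lies in $\mathcal{K}$ by closure under composition.

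The one place that calls for care — and the closest thing to an obstacle here — is the bookkeeping in this induction: one must check that padding $g,g'$ with a dummy last variable and placing $p^{k+1}_{k+1}$ in the first slot genuinely rewrites $\Pi(x_{k+1},g,g')$ as a clone term, and that the nullary entries of $N_\Pi^0$ are correctly realized as honest constant operations once $\Pi$ is applied (for instance, $\Pi(x_1,c,c')$ becomes $\Pi(p^1_1,\bar c,\bar{c'})$ with $\bar c,\bar{c'}$ the corresponding unary constants). These are routine verifications driven entirely by the recursive shape of $N_\Pi$, so no essentially new idea is required beyond assembling the three inclusions into the closed cycle above.
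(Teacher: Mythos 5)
Your proof is correct and follows exactly the paper's own argument: the same cycle of inclusions $\Lambda_\Pi \subseteq N_\Pi \subseteq \mathcal{K} \subseteq \Lambda_\Pi$, with the first given by Proposition \ref{prop:norm}, the last by the clone hypothesis and Lemma \ref{lem:sta01}, and the middle one (which the paper dismisses as following ``from the definitions'') spelled out by you as an induction on the arity of normal forms. The extra bookkeeping you supply for the inductive step is a faithful elaboration, not a different route.
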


\begin{proof}
Let $C$ be the clone generated by $\Pi$ and the constant operations. We have to prove  that $\Lambda_\Pi=C$. 
The right to left inclusion is trivial since $\Lambda_\Pi$ is a clone and contains each of the mentioned generators of $C$ by assumption and Lemma \ref{lem:sta01}. 
We derive the converse inclusion and the last part of the statement from the following sequence of inclusions,
\[
\Lambda_\Pi \subseteq N_\Pi \subseteq C \subseteq \Lambda_\Pi,
\]
where the first inclusion is obtained by Proposition \ref{prop:norm}, the second inclusion follows from the definitions of $N_\Pi$ and $C$, and the third inclusion
is a consequence of the first part of this proof.
%
\end{proof}

In the presence of a $\Pi$-decomposable operation $\Pi$, equation \eqref{eqn:ex01}  has interesting consequences on the equational theory of the the algebra $\struc{A, \Pi,0,1}$, where $\Pi$ is a pivotal operation. 

\begin{lemma}
If $\Pi$ is a pivotal operation on $A$ that satisfies \eqref{eqn:ex01}, then it satisfies the following equations:
\begin{align}\Pi(0,1,z) & =z,\\
\Pi(1,1,z)& =1,\label{eqn:poi}\\
\Pi(0,y,0)& =0,\label{eqn:poibis}\\
\Pi(1,y,0)& =y.
\end{align}
\end{lemma}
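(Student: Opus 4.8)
The plan is to exploit the standing assumption of this subsection, namely that $\Pi$ is itself $\Pi$-decomposable, so that the decomposition identity \eqref{eqn:dec} applies to $\Pi$ along any of its three arguments. Concretely, decomposing $\Pi$ along its third argument gives $\Pi(x,y,z)=\Pi(z,\Pi(x,y,1),\Pi(x,y,0))$, while decomposing along its second argument gives $\Pi(x,y,z)=\Pi(y,\Pi(x,1,z),\Pi(x,0,z))$. Each of the four target identities will then follow by instantiating one of these decompositions at the relevant constant arguments, simplifying the two inner occurrences of $\Pi$ by the pivotal law \eqref{eqn:pri} and by \eqref{eqn:ex01}, and finally collapsing the outer occurrence once more by \eqref{eqn:pri} or by \eqref{eqn:ex01}.

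First I would treat $\Pi(0,1,z)=z$ and $\Pi(1,1,z)=1$ by decomposing along the third argument. For the former, $\Pi(0,1,z)=\Pi(z,\Pi(0,1,1),\Pi(0,1,0))$, where $\Pi(0,1,1)=1$ by \eqref{eqn:pri} and $\Pi(0,1,0)=0$ by \eqref{eqn:ex01}; hence the expression reduces to $\Pi(z,1,0)=z$ by a final application of \eqref{eqn:ex01}. For the latter, $\Pi(1,1,z)=\Pi(z,\Pi(1,1,1),\Pi(1,1,0))$, where $\Pi(1,1,1)=1$ by \eqref{eqn:pri} and $\Pi(1,1,0)=1$ by \eqref{eqn:ex01}, so the value is $\Pi(z,1,1)=1$ by \eqref{eqn:pri}.

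The remaining two identities $\Pi(0,y,0)=0$ and $\Pi(1,y,0)=y$ I would handle symmetrically by decomposing along the second argument. For the former, $\Pi(0,y,0)=\Pi(y,\Pi(0,1,0),\Pi(0,0,0))$, and both inner terms equal $0$ (by \eqref{eqn:ex01} and by \eqref{eqn:pri} respectively), so the result is $\Pi(y,0,0)=0$ by \eqref{eqn:pri}. For the latter, $\Pi(1,y,0)=\Pi(y,\Pi(1,1,0),\Pi(1,0,0))$, where $\Pi(1,1,0)=1$ by \eqref{eqn:ex01} and $\Pi(1,0,0)=0$ by \eqref{eqn:pri}, giving $\Pi(y,1,0)=y$ by \eqref{eqn:ex01}.

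I expect no genuine obstacle here: once the correct argument to decompose along is selected, every step is a direct substitution followed by an appeal to \eqref{eqn:pri} or \eqref{eqn:ex01}. The only point requiring care is that the argument relies essentially on the $\Pi$-decomposability of $\Pi$; without it, the values of $\Pi$ on triples other than those already fixed by \eqref{eqn:pri} and \eqref{eqn:ex01} are unconstrained, so these identities would fail in general. For this reason I would make the use of the standing hypothesis $\Pi\in\Lambda_\Pi$ explicit at the very outset of the proof.
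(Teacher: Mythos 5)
Your proof is correct and follows essentially the same route as the paper's: decompose $\Pi$ itself via \eqref{eqn:dec} along the second or third argument, then collapse the inner terms using \eqref{eqn:pri} and \eqref{eqn:ex01}; the paper writes out only the case $\Pi(0,1,z)=z$ (identically to yours) and leaves the rest as similar computations, which you carry out in full. Your closing remark is also well taken: the hypothesis $\Pi\in\Lambda_\Pi$ is indeed indispensable and is only implicit in the paper as the standing assumption of that subsection, so stating it explicitly is a genuine improvement in precision rather than a deviation.
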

\begin{proof}
The proof follows from straightforward applications of equations \eqref{eqn:dec} and \eqref{eqn:ex01}. For instance, we obtain successively 
\begin{equation*}
\Pi(0,1,z) =  \Pi(z, \Pi(0,1,1), \Pi(0,1,0)) =  \Pi(z,1,0) = z,
\end{equation*}
where the first equality is obtained by \eqref{eqn:dec} and the two last ones by \eqref{eqn:ex01}.
\end{proof}

 According to Proposition  \ref{prop:clo}, if $\Pi$ is a pivotal operation that satisfies equations \eqref{eqn:ex01} and \eqref{eqn:ex04}, then $\Lambda_\Pi$ is a clone. In the next theorem, we prove that the converse statement also holds, under the assumption that $\Pi\in \Lambda_\Pi$ and that
\begin{align}
\Pi(\Pi(1,0,1),0,1) & =\Pi(1, \Pi(0,0,1), \Pi(1,0,1)),\label{eqn:fine01}\\
\Pi(\Pi(0,0,1),0,1) & = \Pi(0, \Pi(0,0,1), \Pi(1,0,1))\label{eqn:fine02}.
\end{align}

\begin{theorem}\label{thm:clo}
Let $\Pi$ be  a $\Pi$-decomposable pivotal operation that satisfies \eqref{eqn:fine01} and \eqref{eqn:fine02}. The following conditions are equivalent: 
\begin{enumerate}[(i)]
 \item\label{it:poi01} $\Lambda_\Pi$ is a clone,
 \item\label{it:poi02} $\Pi$ satisfies  equations \eqref{eqn:ex01} and  \eqref{eqn:ex04}.
\end{enumerate}
In this case, $\Lambda_\Pi$ is the clone generated by $\Pi$ and the constant maps.
\end{theorem}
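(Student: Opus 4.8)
The equivalence splits into two directions plus the final generation claim. The implication $(\ref{it:poi02})\implies(\ref{it:poi01})$ is already available: equations \eqref{eqn:ex01} and \eqref{eqn:ex04} are exactly the two hypotheses of Proposition \ref{prop:clo}, which then asserts that $\Lambda_\Pi$ is a clone. So the content lies in the converse $(\ref{it:poi01})\implies(\ref{it:poi02})$, and the plan is to extract both \eqref{eqn:ex01} and \eqref{eqn:ex04} from the assumption that $\Lambda_\Pi$ is a clone, keeping in force the standing hypotheses $\Pi\in\Lambda_\Pi$, \eqref{eqn:fine01} and \eqref{eqn:fine02}. Equation \eqref{eqn:ex01} is immediate, since a clone contains the unary projection $p_1^1$, and Lemma \ref{lem:sta03} then gives \eqref{eqn:ex01}. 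All the difficulty is concentrated in \eqref{eqn:ex04}.

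My approach to \eqref{eqn:ex04} rests on a reduction-by-restriction principle. Write $g(x,y,z,t,u)=\Pi(\Pi(x,y,z),t,u)$ and $h(x,y,z,t,u)=\Pi(x,\Pi(y,t,u),\Pi(z,t,u))$ for the two sides. Both are compositions of $\Pi$ with projections, so since $\Pi\in\Lambda_\Pi$ and $\Lambda_\Pi$ is a clone, we have $g,h\in\Lambda_\Pi$, i.e.\ both are $\Pi$-decomposable. Iterating \eqref{eqn:dec} rewrites any $\Pi$-decomposable $n$-ary operation as a $\Pi$-normal form (Proposition \ref{prop:norm}) whose constant leaves are precisely its values on $\{0,1\}^n$; hence two $\Pi$-decomposable operations coincide as soon as they agree on $\{0,1\}^n$. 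I would therefore reduce the proof of \eqref{eqn:ex04} to checking the single identity $g=h$ over the finite set $\{0,1\}^5$.

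I would organise the finite check by the pair $(t,u)$. For $(t,u)\in\{(0,0),(1,1),(1,0)\}$ both $g$ and $h$ collapse to a common value by \eqref{eqn:pri} and \eqref{eqn:ex01} (for instance, for $(t,u)=(1,0)$ both sides equal $\Pi(x,y,z)$). The only substantial case is $(t,u)=(0,1)$. Writing $a=\Pi(0,0,1)$ and $b=\Pi(1,0,1)$ and running through $(x,y,z)\in\{0,1\}^3$, the comparison reduces, after simplifying with \eqref{eqn:pri}, \eqref{eqn:ex01} and the four consequences recorded in the lemma preceding the theorem, to four scalar identities. Those arising at $(x,y,z)=(0,0,1)$ and $(1,0,1)$ are $\Pi(a,0,1)=\Pi(0,a,b)$ and $\Pi(b,0,1)=\Pi(1,a,b)$, which are verbatim the standing hypotheses \eqref{eqn:fine02} and \eqref{eqn:fine01}. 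The remaining two, at $(x,y,z)=(0,1,0)$ and $(1,1,0)$, read $\Pi(0,b,a)=a$ and $\Pi(1,b,a)=b$. These I would obtain from the $\Pi$-decomposition of $\Pi$ itself: decomposing $\Pi(x,y,z)$ successively in its three arguments and simplifying the leaves with \eqref{eqn:pri} and \eqref{eqn:ex01} yields the identity $\Pi(x,y,z)=\Pi(z,\Pi(y,1,\Pi(x,b,a)),\Pi(y,x,0))$, valid on all of $A^3$. Evaluating it at $(x,y,z)=(0,0,1)$ and collapsing the right-hand side with $\Pi(0,1,w)=w$ and $\Pi(1,w,0)=w$ (both from that lemma) gives $a=\Pi(0,b,a)$; evaluating at $(1,0,1)$ gives $b=\Pi(1,b,a)$. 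This settles \eqref{eqn:ex04}, hence $(\ref{it:poi02})$. Finally, when $(\ref{it:poi02})$ holds, $\Lambda_\Pi$ is a clone containing $\Pi$, so the generation statement is exactly Proposition \ref{prop:piv}.

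The one genuinely delicate point, and the expected main obstacle, is that $a=\Pi(0,0,1)$ and $b=\Pi(1,0,1)$ need not lie in $\{0,1\}$. Thus the ``determined by its values on $\{0,1\}^n$'' principle must be applied with leaf constants ranging over all of $A$, which is precisely what leaves four residual identities in the $(t,u)=(0,1)$ case instead of a pure two-valued truth-table comparison; it is also exactly why the two extra equations \eqref{eqn:fine01} and \eqref{eqn:fine02} must be postulated. I expect the bookkeeping of that case to be the most error-prone step of the argument.
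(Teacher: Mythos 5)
Your proof is correct, and it takes a noticeably different route from the paper's. The paper establishes \eqref{eqn:ex04} by decomposing both sides in place, one variable at a time ($x$, then $y$, then $z$, then $u$, then $t$), each step being an application of \eqref{eqn:dec} to compositions of $\Pi$ with constants (legitimate since $\Lambda_\Pi$ is a clone containing $\Pi$ and the constants); this produces a chain of intermediate identities whose base cases are exactly \eqref{eqn:fine01} and \eqref{eqn:fine02}. You instead extract a determination principle --- an $n$-ary $\Pi$-decomposable operation equals the canonical normal form whose leaves are its values on $\{0,1\}^n$, hence two such operations of equal arity coincide once they agree on $\{0,1\}^n$ --- and reduce \eqref{eqn:ex04} to a finite verification over $\{0,1\}^5$. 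Your bookkeeping in the critical case $(t,u)=(0,1)$ is accurate: writing $a=\Pi(0,0,1)$ and $b=\Pi(1,0,1)$, the eight triples $(x,y,z)\in\{0,1\}^3$ yield four trivial cases and the four identities $\Pi(a,0,1)=\Pi(0,a,b)$, $\Pi(b,0,1)=\Pi(1,a,b)$, $\Pi(0,b,a)=a$, $\Pi(1,b,a)=b$; the first two are \eqref{eqn:fine02} and \eqref{eqn:fine01} verbatim, and I verified that your identity $\Pi(x,y,z)=\Pi\bigl(z,\Pi\bigl(y,1,\Pi(x,b,a)\bigr),\Pi(y,x,0)\bigr)$ follows from \eqref{eqn:dec}, \eqref{eqn:pri} and \eqref{eqn:ex01}, and that evaluating it at $(0,0,1)$ and $(1,0,1)$ gives the last two. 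What your route buys: it makes completely transparent why \eqref{eqn:fine01} and \eqref{eqn:fine02} are the right auxiliary hypotheses (they are precisely the truth-table entries that cannot be derived from the other assumptions), and the determination principle is a reusable tool. What it costs: that principle is slightly stronger than the bare statement of Proposition \ref{prop:norm} (which only asserts $\Lambda_\Pi\subseteq N_\Pi$); it needs the construction in that proposition's proof, namely that iterated application of \eqref{eqn:dec} produces a normal form whose tree shape is independent of the operation and whose leaves are the values on $\{0,1\}^n$. You state this reasoning explicitly, so it is a citation nuance rather than a gap, but in a formal write-up it should be recorded as a standalone lemma.
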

\begin{proof}
Proposition \ref{prop:clo} states that (\ref{it:poi02})~$\implies$~(\ref{it:poi01}). Conversely, assume that $\Pi$ is a pivotal operation such that $\Lambda_\Pi$ is a clone that contains $\Pi$. By Lemma \ref{lem:sta03}, it follows that $\Pi$ satisfies equation \eqref{eqn:ex01}.

 We prove that \eqref{eqn:ex04} also holds. In what follows, we use without further warning the fact that $\Lambda_\Pi$ is a clone that contains $\Pi$ and every constant operation to apply  \eqref{eqn:dec} to operations which are compositions of $\Pi$ and constant ones.

Hence, if $L(x,y,z)$ and $R(x,y,z)$ denote the operations given by the left-hand side and right-hand side of equation \eqref{eqn:ex04}, respectively, we have
\begin{align*}
L(x,y,z) & = \Pi\big(x, \Pi\big(\Pi(1,y,z),t,u\big),\Pi\big(\Pi(0,y,z),t,u\big)\big),\\
R(x,y,z) & = \Pi\big(x, \Pi\big(1,\Pi(y,t,u), \Pi(z,t,u)\big), \Pi\big(0,\Pi(y,t,u), \Pi(z,t,u)\big)\big).
\end{align*}
To prove that $L(x,y,z)=R(x,y,z)$ it suffices to prove that the two following equations hold:
\begin{align}
\Pi\big(\Pi(1,y,z),t,u\big) & = \Pi\big(1,\Pi(y,t,u), \Pi(z,t,u)\big),\label{eqn:lkj01}\\
\Pi\big(\Pi(0,y,z),t,u\big) & = \Pi\big(0,\Pi(y,t,u), \Pi(z,t,u)\big).\label{eqn:lkj02}
\end{align}
We prove that \eqref{eqn:lkj01} holds (with the help of \eqref{eqn:fine01}). Equation \eqref{eqn:lkj02} can be obtained in a similar way (with the help of \eqref{eqn:fine02}).

By decomposing with respect to $y$, we obtain that the right-hand side of \eqref{eqn:lkj01} is equal to 
\begin{equation*}
 \Pi\big(y, \Pi\big(\Pi(1,1,z),t,u\big), \Pi\big(\Pi(1,0,z),t,u\big)\big),
\end{equation*}
while the left-hand side of \eqref{eqn:lkj01} is equal to
 \begin{equation*} \Pi\big(y,  \Pi\big(1,\Pi(1,t,u), 
 \Pi(z,t,u)\big),\\ \Pi\big(1,\Pi(0,t,u), \Pi(z,t,u)\big)\big).\label{eqn:lkj04} 
\end{equation*} 
Hence, to prove that equation \eqref{eqn:lkj01} holds, we first  observe that 
\begin{align*}
 \Pi\big(\Pi(1,1,z),t,u\big) & = \Pi(1,t,u) \qquad \qquad \text{by \eqref{eqn:poi}},\\
& = \Pi\big(z,\Pi(1,t,u),\Pi(1,t,u)\big)  \qquad \qquad \text{by \eqref{eqn:pri}},\\
& = \Pi\big(z, \Pi\big(1,\Pi(1,t,u),\Pi(1,t,u)\big),\Pi\big(1,\Pi(0,t,u),\Pi(1,t,u)\big)\big),\ \text{by \eqref{eqn:dec}, \eqref{eqn:pri},}\\
& = \Pi\big(1, \Pi(1,t,u), \Pi(z,t,u)\big)  \qquad \qquad \text{by \eqref{eqn:dec}}.
\end{align*}
It remains to prove that
\begin{equation}\label{eqn:hgt}
\Pi\big(\Pi(1,0,z),t,u\big) = \Pi\big(1, \Pi(0,t,u), \Pi(z,t,u)\big).
\end{equation}
By decomposing with regard to $z$ we obtain
\begin{align*}
\Pi\big(\Pi(1,0,z),t,u\big) & = \Pi\big(z, \Pi\big(\Pi(1,0,1),t,u\big), \Pi(0,t,u)\big),\\
\Pi\big(1, \Pi(0,t,u), \Pi(z,t,u)\big) & =  \Pi\big(z, \Pi\big(1, \Pi(0,t,u), \Pi(1,t,u)\big),  \Pi(0,t,u)\big),
\end{align*}
and it suffices to prove that
\begin{equation}\label{eqn:lkh}
\Pi\big(\Pi(1,0,1),t,u\big) = \Pi\big(1, \Pi(0,t,u), \Pi(1,t,u)\big).
\end{equation}
Observe that by decomposing with respect to $u$,
\begin{align*}
\Pi\big(\Pi(1,0,1),t,u\big)  &= \Pi\big(u, \Pi\big(\Pi(1,0,1),t,1\big),\Pi\big(\Pi(1,0,1),t,0\big)\big),\\
 \Pi\big(1, \Pi(0,t,u), \Pi(1,t,u)\big) & = \Pi\big(u,\Pi\big(1, \Pi(0,t,1), \Pi(1,t,1)\big), \Pi\big(1, 0, \Pi(1,t,0)\big)\big),
\end{align*}
where we have applied \eqref{eqn:poibis} to obtain the second  identity. Hence, to prove \eqref{eqn:lkh} it suffices to prove that
\begin{align}
\Pi\big(\Pi(1,0,1),t,1\big) & = \Pi\big(1, \Pi(0,t,1), \Pi(1,t,1)\big),\label{hgf01}\\
\Pi\big(\Pi(1,0,1),t,0\big) & =\Pi\big(1, 0, \Pi(1,t,0)\big).\label{hgf02}
\end{align}
By \eqref{eqn:dec} we obtain
\begin{equation*}
\Pi\big(\Pi(1,0,1),t,0\big)= \Pi\big(t, \Pi(1,0,1), 0\big)=
\Pi\big(1, 0, \Pi(1,t,0)\big), 
\end{equation*}
which proves \eqref{hgf02}. Next, we observe that
\begin{align*}
\Pi\big(\Pi(1,0,1),t,1\big) & = \Pi\big(t, 1, \Pi\big(\Pi(1,0,1),0,1\big)\big)\\
 \Pi\big(1, \Pi(0,t,1), \Pi(1,t,1)\big) & = \Pi\big(t, 1, \Pi\big(1, \Pi(0,0,1), \Pi(1,0,1)\big)\big).
\end{align*}
We conclude that \eqref{hgf01} is satisfied by applying \eqref{eqn:fine02}, which holds by assumption. 
\end{proof}

Since \eqref{eqn:fine01} and \eqref{eqn:fine02} are instances of \eqref{eqn:ex04}, Theorem \ref{thm:clo} can be restated as follows. 
\begin{corollary}\label{cor:main}
Let  $\Pi$ be a $\Pi$-decomposable pivotal operation. 
The following conditions are equivalent: 
\begin{enumerate}[(i)]  
 \item\label{it:p01} $\Lambda_\Pi$ is a clone and $\Pi$ satisfies \eqref{eqn:fine01} and \eqref{eqn:fine02},
 \item\label{it:p02} $\Pi$ satisfies \eqref{eqn:ex01} and \eqref{eqn:ex04}.
\end{enumerate}
\end{corollary}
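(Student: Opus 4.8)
The plan is to read off both implications from Theorem~\ref{thm:clo} and Proposition~\ref{prop:clo}, once one records the elementary observation that equations~\eqref{eqn:fine01} and~\eqref{eqn:fine02} are particular instances of~\eqref{eqn:ex04}. Concretely, substituting $x=1$, $y=0$, $z=1$, $t=0$, $u=1$ into~\eqref{eqn:ex04} reproduces exactly~\eqref{eqn:fine01}, while the substitution $x=0$, $y=0$, $z=1$, $t=0$, $u=1$ reproduces~\eqref{eqn:fine02}. Hence any $\Pi$ satisfying~\eqref{eqn:ex04} automatically satisfies both~\eqref{eqn:fine01} and~\eqref{eqn:fine02}. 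This single remark, together with the standing hypothesis that $\Pi$ is a $\Pi$-decomposable pivotal operation, is all that is needed to repackage the earlier results.

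To establish (\ref{it:p01})~$\implies$~(\ref{it:p02}), I would assume that $\Lambda_\Pi$ is a clone and that $\Pi$ satisfies~\eqref{eqn:fine01} and~\eqref{eqn:fine02}. Since $\Pi$ is $\Pi$-decomposable and satisfies these two equations, the hypotheses of Theorem~\ref{thm:clo} are met verbatim, and its condition~(\ref{it:poi01})---that $\Lambda_\Pi$ is a clone---holds by assumption. The equivalence asserted in that theorem then forces condition~(\ref{it:poi02}), namely that $\Pi$ satisfies~\eqref{eqn:ex01} and~\eqref{eqn:ex04}, which is precisely~(\ref{it:p02}).

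For the converse (\ref{it:p02})~$\implies$~(\ref{it:p01}), I would assume that $\Pi$ satisfies~\eqref{eqn:ex01} and~\eqref{eqn:ex04}. Proposition~\ref{prop:clo} then yields directly that $\Lambda_\Pi$ is a clone. Moreover, by the substitution observation of the first paragraph, \eqref{eqn:ex04} already entails~\eqref{eqn:fine01} and~\eqref{eqn:fine02}, so $\Pi$ satisfies both. Combining these two facts gives~(\ref{it:p01}), completing the argument.

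The only step requiring verification is the instantiation recorded at the outset, and it is entirely routine: it amounts to checking that the two displayed tuples of arguments turn~\eqref{eqn:ex04} into~\eqref{eqn:fine01} and~\eqref{eqn:fine02} symbol for symbol. I anticipate no genuine obstacle; the mathematical content of the corollary lies wholly in Theorem~\ref{thm:clo}, and the present statement merely reorganizes that theorem by absorbing~\eqref{eqn:fine01} and~\eqref{eqn:fine02} into~\eqref{eqn:ex04} on one side of the equivalence.
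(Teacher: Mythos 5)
Your proof is correct and takes essentially the same route as the paper, which disposes of this corollary with the single remark that \eqref{eqn:fine01} and \eqref{eqn:fine02} are instances of \eqref{eqn:ex04} (exactly the substitutions you exhibit) and then reads both implications off Theorem~\ref{thm:clo}, whose converse direction is in turn Proposition~\ref{prop:clo}, the result you invoke directly. Nothing to correct.
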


By noting that equations  \eqref{eqn:fine01} and \eqref{eqn:fine02} are satisfied by a symmetric pivotal operation that satisfies  \eqref{eqn:ex01}, we obtain the following corollary.

\begin{corollary}
Let  $\Pi$ be a symmetric $\Pi$-decomposable pivotal operation that satisfies \eqref{eqn:ex01}. 
The following conditions are equivalent: 
\begin{enumerate}[(i)]
 \item\label{it:pl01} $\Lambda_\Pi$ is a clone,
 \item\label{it:pl02} $\Pi$ satisfies \eqref{eqn:ex04}.
\end{enumerate}
\end{corollary}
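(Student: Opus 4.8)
The plan is to obtain this corollary as a direct specialization of Corollary \ref{cor:main}. That corollary asserts, for any $\Pi$-decomposable pivotal operation, the equivalence of the condition ``$\Lambda_\Pi$ is a clone and $\Pi$ satisfies \eqref{eqn:fine01} and \eqref{eqn:fine02}'' with the condition ``$\Pi$ satisfies \eqref{eqn:ex01} and \eqref{eqn:ex04}''. Since \eqref{eqn:ex01} is a standing hypothesis here, the right-hand condition collapses to just \eqref{eqn:ex04}. Symmetrically, if I can show that \eqref{eqn:fine01} and \eqref{eqn:fine02} hold automatically under the present assumptions, then the left-hand condition collapses to just ``$\Lambda_\Pi$ is a clone''. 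These two reductions yield precisely the desired equivalence $(\ref{it:pl01}) \iff (\ref{it:pl02})$, so the whole task reduces to verifying \eqref{eqn:fine01} and \eqref{eqn:fine02}.

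The first step is to compute the two nested constant terms occurring in \eqref{eqn:fine01} and \eqref{eqn:fine02}. From the pivotal identity \eqref{eqn:pri} I read off $\Pi(0,1,1)=1$ and $\Pi(1,0,0)=0$. Since $\{1,0,1\}$ agrees with $\{0,1,1\}$ as a multiset, and $\{0,0,1\}$ with $\{1,0,0\}$, symmetry (that is, \eqref{eqn:sym01} and \eqref{eqn:sym02}) lets me reorder the arguments to conclude
\[
\Pi(1,0,1)=1 \qquad\text{and}\qquad \Pi(0,0,1)=0.
\]

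Substituting these two values makes both \eqref{eqn:fine01} and \eqref{eqn:fine02} degenerate into trivial identities: the left- and right-hand sides of \eqref{eqn:fine01} each reduce to $\Pi(1,0,1)=1$, while those of \eqref{eqn:fine02} each reduce to $\Pi(0,0,1)=0$. Hence both auxiliary equations are satisfied, and feeding this back into Corollary \ref{cor:main} together with the assumption \eqref{eqn:ex01} finishes the proof; the final sentence of the statement follows as well, since Corollary \ref{cor:main} then places us in the setting of Proposition \ref{prop:piv}. I do not anticipate any genuine obstacle: the only substantive content is the short evaluation of $\Pi(1,0,1)$ and $\Pi(0,0,1)$ and the observation that these values collapse \eqref{eqn:fine01}--\eqref{eqn:fine02}. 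The only point demanding a little care is the bookkeeping of arguments when applying symmetry to reorder the nested pivotal terms.
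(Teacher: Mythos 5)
Your proposal is correct and follows essentially the same route as the paper: the paper obtains this corollary precisely by noting that \eqref{eqn:fine01} and \eqref{eqn:fine02} hold automatically for a symmetric pivotal operation (your evaluation $\Pi(1,0,1)=1$, $\Pi(0,0,1)=0$ via \eqref{eqn:pri} and symmetry is exactly the omitted verification) and then specializing Corollary \ref{cor:main}. The only stray remark is your appeal to a ``final sentence'' of the statement, which this corollary does not actually contain, but that is harmless.
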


\subsection{Further issues and counter-examples}\label{sub:nec2}

In view of Theorem \ref{thm:clo} and Corollary \ref{cor:main}, a natural question arises: can the $\Pi$-decomposability of $\Pi$ be deduced from equations \eqref{eqn:ex01} and  \eqref{eqn:ex04}? 
Example \ref{ex:bool} shows that the answer is positive if $\Pi$ is a Boolean pivotal operation. Now, we prove that it is not true in general. We set $A_\Delta=\{(x,y)\in A^2\mid x\neq y \ \&\ (x,y)\neq (1,0)\}$.

\begin{proposition}\label{prop:ex}
Let $\Pi$ be a pivotal operation that satisfies  \eqref{eqn:ex01}. If there exits a function $f\colon A_\Delta \to A$ such that
$\Pi(x,y,z)=f(y,z)$ for every $(y,z)\in A_\Delta$, then  $\Pi$ also satisfies \eqref{eqn:ex04}.
\end{proposition}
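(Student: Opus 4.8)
The plan is to prove \eqref{eqn:ex04} by a case distinction on the pair $(t,u)$, exploiting the fact that $A^2$ is the disjoint union of the diagonal $\{(y,y)\mid y\in A\}$, the singleton $\{(1,0)\}$, and $A_\Delta$. The hypothesis says precisely that $\Pi(\,\cdot\,,y,z)$ is constant in its first argument whenever $(y,z)\in A_\Delta$, that it equals $y$ on the diagonal (by \eqref{eqn:pri}), and that it is the first projection exactly at $(y,z)=(1,0)$ (by \eqref{eqn:ex01}). The structural point that makes the argument collapse is that all the $\Pi$-applications in \eqref{eqn:ex04} whose value could possibly depend on their first slot share the \emph{same} last two arguments $t$ and $u$: on the left the outer application is $\Pi(\,\cdot\,,t,u)$, and on the right the two inner applications are $\Pi(y,t,u)$ and $\Pi(z,t,u)$. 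Hence all of them are governed by the same one of the three regimes, determined solely by where $(t,u)$ falls.

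First I would dispatch the diagonal case $t=u$: by \eqref{eqn:pri} the left-hand side collapses to $t$, while on the right $\Pi(y,t,t)=\Pi(z,t,t)=t$, so the right-hand side is $\Pi(x,t,t)=t$ as well. Next, for $(t,u)=(1,0)$, equation \eqref{eqn:ex01} gives $\Pi(y,1,0)=y$ and $\Pi(z,1,0)=z$, so the right-hand side is $\Pi(x,y,z)$, and the same equation applied to the outer operation on the left gives $\Pi(\Pi(x,y,z),1,0)=\Pi(x,y,z)$; the two sides agree. Finally, for $(t,u)\in A_\Delta$ the hypothesis forces $\Pi(a,t,u)=f(t,u)$ for every $a\in A$, so both inner applications on the right equal $f(t,u)$ and the right-hand side becomes $\Pi(x,f(t,u),f(t,u))=f(t,u)$ by \eqref{eqn:pri}, while the left-hand side is $\Pi(\Pi(x,y,z),t,u)=f(t,u)$ directly. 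In every case the two sides coincide, which is exactly \eqref{eqn:ex04}.

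There is no genuinely hard step here: once the split is made on $(t,u)$, each of the three verifications is a single application of \eqref{eqn:pri}, of \eqref{eqn:ex01}, or of the hypothesis. The only thing that requires foresight is the decision to split on $(t,u)$ rather than on $(x,y,z)$; resolving the inner operation $\Pi(x,y,z)$ on the left first would obscure the symmetry, whereas recognizing that $t,u$ are the common last two arguments of every potentially non-constant application makes all three cases reduce instantly. I would also flag the tacit convention $0\neq 1$, which is what guarantees that $\{(1,0)\}$, the diagonal, and $A_\Delta$ genuinely partition $A^2$; should $0=1$, the pair $(1,0)$ would lie on the diagonal and the second case would simply be absorbed into the first, with no change to the conclusion.
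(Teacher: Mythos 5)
Your proof is correct and follows essentially the same route as the paper's: a case split on $(t,u)$ into the diagonal (handled by \eqref{eqn:pri}), the pair $(1,0)$ (handled by \eqref{eqn:ex01}), and $A_\Delta$ (where both sides collapse to $f(t,u)$). Your treatment is slightly more explicit than the paper's terse version, and your remark on the tacit assumption $0\neq 1$ is a reasonable addition, but there is no substantive difference in the argument.
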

\begin{proof}
Note first that  $\Pi$ is well defined by the conditions in the statement. Moreover, equations \eqref{eqn:pri} and \eqref{eqn:ex01} ensure that 
\eqref{eqn:ex04} is satisfied when $t=u$ or $(t,u)=(1,0)$, respectively. Now, if $(t,u)\in A_\Delta$, then 
\begin{gather*}
\Pi(\Pi(x,y,z),t,u)=f(t,u)=\Pi(x,f(t,u), f(t,u))=\Pi(x,\Pi(y,t,u), \Pi(z,t,u)).
\end{gather*}
This shows that \eqref{eqn:ex04} does indeed hold for such a $\Pi$. 
\end{proof}
\begin{example}
Assume that $A$ has at least three elements $0,1,2$. Let $f\colon A_\Delta \to A$ be any mapping that satisfies $f(2,0)=f(2,1)\neq f(2,2)$. Then the pivotal operation $\Pi$ defined as in Proposition \ref{prop:ex} is not $\Pi$-decomposable since $\Pi(1,2,2)=f(2,2)$ while $\Pi(2, \Pi(1,2,1), \Pi(1,2,0))=f(2,0)$.
\end{example}

Theorem \ref{thm:clo} gives a characterization of pivotal operations $\Pi$ such that $\Lambda_\Pi$ is a clone, under the assumption 
that $\Pi\in \Lambda_\Pi$. We now give an example of a pivotal operation $\Pi$ such that $\Lambda_\Pi$ is a clone that does not contain $\Pi$.  
Note however that Example \ref{ex:bool} shows that such a $\Pi$ does not exist in the case of Boolean functions.

\begin{example}\label{ex:clonenopi}
Let $A=\{0,a, 1\}$ and $N\colon A\to A$ be the map defined by $N(0)=1$, $N(a)=a$, and $N(1)=0$. Define $\Pi\colon A^3 \to A$ as the map that satisfies \eqref{eqn:ex01}, \eqref{eqn:pri} and
\begin{align}
\Pi(x,0,1)&=N(x)\label{eqn:oo01}\\
\Pi(x,1,a)&=1\label{eqn:oo02}\\
\Pi(x,0,a)&=1\label{eqn:oo03}\\
\Pi(x,a,1)&=0\label{eqn:oo04}\\
\Pi(x,a,0)&=0.\label{eqn:oo05} 
\end{align}
First, observe that $\Pi\not\in\Lambda_\Pi$. Indeed, for any $x\in A$ we have on the one hand $\Pi(x,a,a)=a$ while $\Pi(a, \Pi(a,\Pi(x,1,a), \Pi(x,0,a))=\Pi(a,1,1)=1$. According to Proposition \ref{prop:clo}, it suffices to prove that $\Pi$ satisfies equation \eqref{eqn:ex04}. If  $t=u$ or $(t,u)\in \{(a,0), (a,1), (1,a),(0,a)\}$ then $\Pi(x,t,u)$ is constant and \eqref{eqn:ex04} holds trivially. If $(t,u)=(1,0)$ then $\Pi(x,t,u)$ is the first projection and  \eqref{eqn:ex04} holds as well. It remains to consider the case $(t,u)=(0,1)$. We have to prove
\begin{equation}\label{eq:iiy}
\Pi(x, N(y), N(z))~=~N(\Pi(x,y,z)).
\end{equation}
If $y=z$ then or $(y,z)\in \{(a,0), (a,1), (1,a),(0,a)\}$ then\eqref{eq:iiy} clearly holds by \eqref{eqn:oo02}~-~\eqref{eqn:oo05}. If $(y,z)\in\{(0,1),(1,0)\}$ then  \eqref{eq:iiy} holds by \eqref{eqn:oo01}.
%
\end{example}

\section{Conclusions and Further Research}\label{sec:final}
 In this paper, we studied pivotal decompositions of operations from a clone theory perspective, and presented a 
 characterization of classes of 
 $\Pi$-decomposable operations that are clones in the case when the pivotal operation $\Pi$ is itself $\Pi$-decomposable and satisfies \eqref{eqn:fine01} and \eqref{eqn:fine02}. 
 However in Example \ref{ex:clonenopi} we showed that there exists a clone of $\Pi$-decomposable operations that does not contain $\Pi$, i.e.,
  $\Pi$ is not $\Pi$-decomposable. This leaves open a complete description of classes of  pivotally decomposable operations that are clones. 
  Moreover, once such a description is  obtained, a structural analysis of the set of all pivotally decomposable clones is to be expected.

Another topic that will deserve our attention is motivated by Theorem \ref{thm:symsy} that states that if a pivotal operation $\Pi$ 
is a $\Pi$-decomposable and satisfies $\Pi(x,1,0)=x$, $\Pi(0,0,1)=0$, and 
$\Pi(1,0,1)=1$, then  $\Pi$ is symmetric and hence is a majority operation. 
Furthermore, if $\Pi$ satisfies \eqref{eqn:ex04},  then $\Pi$ is a median operation (see \cite{Bandelt1983} and the bibliography therein). 
These observations establish noteworthy connections between pivotally decomposable classes and median algebras, and should deserve 
a deeper study in future research. 

As a third line of research that emerges from this paper deals with normal form representations of operations arising from pivotal decomposition schemes.
Proposition \ref{prop:norm} provides  normal form representations for the elements of a 
pivotally decomposable class. Here, determining canonical expressions for these representations based on the pivotal operation, 
as well as studying the complexity of such representations (e.g., with respect to classical normal form representations)
constitute an interesting topic of research which is under current investigation. 
We envision a similar study to that of  \cite{CFL} where, in particular, it was shown that normal form representations of Boolean functions 
that use the ternary median as the only logical connective, produce asymptotically 
shorter representations than the classical DNF, CNF and polynomial representations.

\section*{Acknowledgment}

This work was supported  by the internal research project  F1R-MTH-PUL-15MRO3 of the University of Luxembourg.

\end{document}